\newlength{\defbaselineskip}
\newcommand{\setlinespacing}[1]%
           {\setlength{\baselineskip}{#1 \defbaselineskip}}
\numberwithin{equation}{section}
\newtheorem{thm}{Theorem}[section]
\newtheorem{lem}[thm]{Lemma}
\theoremstyle{definition}
\theoremstyle{remark}
\newtheorem{rem}[thm]{Remark}
\numberwithin{equation}{section}
\begin{document}

\title[Partial regularity problem]
{On local well-posedness of nonlinear dispersive equations with partially regular data}

\author{Youngwoo Koh, Yoonjung Lee and Ihyeok Seo}

\thanks{This research was supported by NRF-2022R1F1A1061968 (Y. Koh), NRF-2021R1A4A1032418 (Y. Lee), and NRF-2022R1A2C1011312 (I. Seo).}

\subjclass[2010]{Primary: 35E15; Secondary: 35Q55, 35L70}
\keywords{well-posedness, partially regular data, dispersive equations}

\address{Department of Mathematics Education, Kongju National University, Kongju 32588, Republic of Korea}
\email{ywkoh@kongju.ac.kr}

\address{Department of Mathematics, Pusan National University, Busan 46241, Republic of Korea}
\email{yjglee@pusan.ac.kr}

\address{Department of Mathematics, Sungkyunkwan University, Suwon 16419, Republic of Korea}
\email{ihseo@skku.edu}

\begin{abstract}
We revisit the local well-posedness theory of nonlinear Schr\"odinger and wave equations in Sobolev spaces $H^s$ and $\dot{H}^s$, $0< s\leq 1$.
The theory has been well established over the past few decades under Sobolev initial data regular with respect to all spatial variables. 
But here, we reveal that the initial data do not need to have complete regularity like Sobolev spaces, but only partially regularity with respect to some variables is sufficient.
To develop such a new theory, we suggest a refined Strichartz estimate which has a different norm for each spatial variable. This makes it possible to extract a different integrability/regularity of the data from each variable. 
\end{abstract}

\maketitle

\section{Introduction}
In this paper we develop a well-posedness theory of nonlinear dispersive equations
with just partially regular initial data.
As representative models, we shall deal with the nonlinear Schr\"odinger and wave equations,
\begin{equation}\label{NLS}
\begin{cases}
i\partial_t u +\Delta u= F_p(u), \\
u(0, x)=f(x),
\end{cases}
\end{equation}
and 
\begin{equation}\label{NLW}
\begin{cases}
\partial_t^2 u-\Delta u=F_p(u), \\
u(0, x)=f(x),\\
\partial_t u(0, x)=g(x),
\end{cases}
\end{equation}
where $(t, x) \in \mathbb{R} \times \mathbb{R}^N$ and the nonlinearity $F_p \in C^1$ with $p>1$ satisfies
\begin{equation}\label{as}
|F_p(u)|\lesssim |u|^p \quad \text{and} \quad |u||F'_p(u)|\sim |F_p(u)|.
\end{equation}

Typical examples of \eqref{as} are $F_p(u)=\pm |u|^{p-1}u$ and $F_p(u)=\pm |u|^p$ with which
the equations enjoy the scaling invariance;
if $u(t,x)$ is a solution of \eqref{NLS} and \eqref{NLW}, then so is
\begin{equation*}
u_{\delta}(t, x)= \delta^{\frac{2}{p-1}} u(\delta^{\sigma} t, \delta x) \quad (\delta>0)
\end{equation*}
with $\sigma=2$ and $\sigma=1$, respectively.
In addition, the Sobolev norm of the rescaled initial data $f_{\delta}(x)=u_{\delta}(0, x)$ is given in terms of the original $f$ as 
\begin{equation}\label{scaling}
\| f_{\delta}\|_{\dot{H}^s(\mathbb{R}^N)} =\delta^{\frac{2}{p-1}+s-\frac{N}{2}} \|f\|_{ \dot{H}^s(\mathbb{R}^N)}
\end{equation}
which determines
the scale-invariant Sobolev space $\dot{H}^{s_c}$ with the so-called critical Sobolev index 
$$
s_c= \frac{N}{2}-\frac{2}{p-1}.
$$
In this regard, the case $s=s_s$ (alternatively $p=1+\frac{4}{N-2s}$) is referred to as \textit{critical}  while the case $s>s_c$ (alternatively $p<1+\frac{4}{N-2s}$) is called \textit{subcritical}.

Particularly in the subcritical case, one could see that the more one assumes regularity $s$ of initial data, the wider a possible range of $p$ in the nonlinearity.
As such, the local well-posedness of \eqref{NLS} and \eqref{NLW} has been extensively studied in Sobolev spaces $H^s(\mathbb{R}^N)$ and  $\dot{H}^s(\mathbb{R}^N)$ over the past few decades and is well established
(see e.g. \cite{C} for Schr\"odinger case, and \cite{K, LS} for wave case).

However, in this paper, we reveal that the initial data do not need to have complete regularity like $H^s(\mathbb{R}^N)$, but only partially regularity like  $L^2(\mathbb{R}^{N-k}; H^{s} (\mathbb{R}^k))$ is sufficient.
The motivation behind this is as follows: we split the spatial variable  $x\in \mathbb{R}^N$ into two variables as $(x, y)\in \mathbb{R}^{N-k} \times \mathbb{R}^k$, $1\leq k\leq N$, and are inspired by an observation that $L_x^2(\mathbb{R}^{N-k}; \dot{H}_y^{s} (\mathbb{R}^k))$ also has a scale-invariance structure similar to \eqref{scaling},
$$\| f_{\delta}\|_{L_x^2 \dot{H}_y^s} =\delta^{\frac{2}{p-1}+s-\frac{N}{2}} \|f\|_{L_x^2 \dot{H}_y^s},$$
regardless of the value of $k$.
Therefore, the same range of $p$ is guaranteed even if the regularity is given only partially,
by which we can naturally expect a new well-posedness theory with improved regularity assumptions. This is the main contribution of this paper.

It should be also noted that $L^2 (\mathbb{R}^{N-k}; H^s (\mathbb{R}^k))$ is rougher than $H^s(\mathbb{R}^N)$. For instance, if we take
$f(x,y) = \phi_1(x) \phi_2(y)$ with $\phi_1 \in L^2(\mathbb{R}^{N-k})\setminus H^s(\mathbb{R}^{N-k})$ and $\phi_2 \in H^s(\mathbb{R}^{k})$,
then $f \in L^2 (\mathbb{R}^{N-k}; H^s (\mathbb{R}^k))$ but $f \notin H^s (\mathbb{R}^{N})$.
In general, for $1\leq k_1 < k_2 <N$,
$$
H^s(\mathbb{R}^N) \subsetneq L^2 (\mathbb{R}^{N-k_2}; H^s (\mathbb{R}^{k_2})) \subsetneq L^2 (\mathbb{R}^{N-k_1}; H^s (\mathbb{R}^{k_1})).
$$

Our first result is the following local well-posedness theorem for the nonlinear Schr\"odinger equation \eqref{NLS} with partially regular initial data $f \in L_x^2 (\mathbb{R}^{N-2}; H_y^s (\mathbb{R}^2))$, i.e., $\langle \nabla_y \rangle^s f \in L^2(\mathbb{R}^N) $.
Our theorem fully recovers the existing $H^s$ well-posedness theorem (\cite{GV0,GV,Ka,Ka2}), even if the regularity of initial data is imposed only on two variables at least.

\begin{thm}\label{mainThm-S}
Let $N\geq3$ and $0<s\leq 1$. If $1<p<1+\frac{4}{N-2s}$ and
$f \in L_x^2 (\mathbb{R}^{N-2}; H_y^s (\mathbb{R}^2))$, then
there exist a time $T>0$ and a unique solution to \eqref{NLS}
$$
u \in C_t([0, T]; L_x^2 H_y^s ) \cap L_t^q ([0,T];L_x^{r} W_y^{s, \widetilde{r}})
$$
for any pair $(q, r, \widetilde{r}) \in \mathbb{A}_{S}(N, 2)$.
\end{thm}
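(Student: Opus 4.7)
The plan is a standard Banach fixed-point argument for the Duhamel formulation
\begin{equation*}
\Phi(u)(t) = e^{it\Delta}f - i\int_0^t e^{i(t-\tau)\Delta} F_p(u(\tau))\,d\tau,
\end{equation*}
run on a small closed ball in the space
\begin{equation*}
X_T = C_t([0,T]; L_x^2 H_y^s) \cap L_t^q([0,T]; L_x^r W_y^{s,\widetilde r})
\end{equation*}
for a conveniently chosen triple $(q,r,\widetilde r)\in \mathbb{A}_S(N,2)$. The key observation that makes partial regularity natural here is that $\langle\nabla_y\rangle^s$ is a Fourier multiplier only in $y$, so it commutes with $e^{it\Delta}$ and with the Duhamel integral. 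Consequently all estimates reduce to producing Strichartz-type bounds for $u$ and, separately, for $v:=\langle\nabla_y\rangle^s u$.

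To bound $\Phi(u)$ I would first apply the refined Strichartz estimate of the paper to the linear term, giving $\|e^{it\Delta} f\|_{L_t^q L_x^r W_y^{s,\widetilde r}}\lesssim \|f\|_{L_x^2 H_y^s}$, and its inhomogeneous/dual counterpart to control the Duhamel integral in $X_T$ by $\|F_p(u)\|_{L_t^{\widetilde q'}L_x^{\widetilde r'}W_y^{s,\widetilde s'}}$ for a suitable dual admissible triple. For the nonlinear piece I would invoke the fractional chain rule applied only in the $y$-variable: since $|F_p(u)|\lesssim|u|^p$ and $|F_p'(u)|\lesssim|u|^{p-1}$ by \eqref{as} (using $s\le 1$ and $F_p\in C^1$),
\begin{equation*}
\|\langle\nabla_y\rangle^s F_p(u)\|_{L_y^{\widetilde s'}} \lesssim \|u\|_{L_y^{\alpha}}^{p-1}\|\langle\nabla_y\rangle^s u\|_{L_y^{\beta}},
\end{equation*}
with $\tfrac{1}{\widetilde s'}=\tfrac{p-1}{\alpha}+\tfrac{1}{\beta}$; then H\"older in $t$ and $x$ together with Sobolev embedding in $y$ relates $L_y^\alpha$ back to $W_y^{s,\widetilde r}$. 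The subcritical hypothesis $p<1+4/(N-2s)$ guarantees a strictly positive power of $T$ in front of the resulting bound, so for $T$ small $\Phi$ is a strict contraction on a small ball of $X_T$. Differences are estimated in the same way using $|F_p(u)-F_p(v)|\lesssim (|u|^{p-1}+|v|^{p-1})|u-v|$, yielding uniqueness and the contraction.

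Once the fixed point $u\in X_T$ is obtained, I would re-insert it into the refined Strichartz estimates and repeat the same H\"older/chain-rule scheme with each other triple in $\mathbb{A}_S(N,2)$ to upgrade $u$ to $L_t^q L_x^r W_y^{s,\widetilde r}$ for \emph{every} admissible triple, while $C_t L_x^2 H_y^s$-continuity follows from the Duhamel identity together with the strong continuity of the Schr\"odinger group on $L^2$. The main obstacle will be the three-parameter H\"older bookkeeping: one must select $(q,r,\widetilde r)\in\mathbb{A}_S(N,2)$ and H\"older/Sobolev exponents $(\alpha,\beta)$ so that all scaling identities close simultaneously, the fractional chain rule and the $y$-Sobolev embedding are admissible, and a strictly positive $T$-power persists throughout the full subcritical range. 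This exponent counting is less automatic than in the classical $H^s$ theory because of the extra integrability index $\widetilde r$; the saving grace is that $L_x^2\dot H_y^s$ shares the scaling of $\dot H^s(\mathbb R^N)$, which forces the same threshold $1+4/(N-2s)$ to be the one that works.
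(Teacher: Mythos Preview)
Your proposal is correct and follows essentially the same route as the paper: a contraction argument on the Duhamel formulation, refined Strichartz estimates for the linear and inhomogeneous pieces, the fractional chain rule applied only in $y$, and Sobolev embedding in the two-dimensional $y$-variable to close the H\"older exponents with a strictly positive power of $T$ in the subcritical range. The only cosmetic difference is that the paper builds the supremum over all $(q,r,\widetilde r)\in\mathbb A_S(N,2)$ into the definition of the contraction space from the outset (and then proves the key nonlinear estimate, your ``main obstacle,'' for one explicit triple $(q_0,r_0,\widetilde r_0)$ chosen via a small parameter $\epsilon_0$), whereas you run the fixed point for a single triple and upgrade afterward; both organizations are standard and equivalent.
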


The class $\mathbb{A}_{S}(N,k)$ in the theorem is generally defined for $1\leq k<N$ as
\begin{equation*}\label{mix_Sch_pair}
\mathbb{A}_{S}(N,k)
= \Big\{ (q,r,\widetilde{r}) ~:~ 2<q\leq\infty, ~
2\leq \widetilde{r} \leq r < \infty, ~ \frac{2}{q} + \frac{N-k}{r} + \frac{k}{\widetilde{r}} =  \frac{N}{2}  \Big\}.
\end{equation*} 	
It is easy to check that the special case when $r=\widetilde{r}$ becomes
the usual class of Schr\"odinger admissible pairs, except for the endpoint $(q,r)=(2,\frac{2N}{N-2})$.\\

For the nonlinear wave equation \eqref{NLW}, the next theorem fully recovers the existing $\dot{H}^s$ well-posedness theorem\footnote{The low regularity problem where $0<s<1/2$ has a different conjectured range of $p$. The other case $s>1/2$ is more suitable under the scaling consideration. See \cite{K,KT,L,LS,T}. } (\cite{LS}), even if the regularity of initial data is imposed only on one variable at least.

\begin{thm}\label{mainThm-W}
Let $N\ge2$, $\frac{1}{2}<s\leq 1$ and $ 1+\frac{2}{N-1}< p < 1+\frac{4}{N-2s}$. 
If $ \langle \partial_y  \rangle^{s-\frac{1}{2}}  f \in \dot{H}^{1/2}(\mathbb{R}^{N-1}\times\mathbb{R})$ and $\langle \partial_y\rangle^{s-\frac{1}{2}} g \in \dot{H}^{-1/2}(\mathbb{R}^{N-1}\times\mathbb{R})$, there exist a time $T>0$ and a unique solution to \eqref{NLW}
	$$
    \langle \partial_y  \rangle^{s-\frac{1}{2}}  u \in C_t([0, T];  \dot{H}_{x, y}^{1/2} )\cap L_t^q ([0,T];L_x^{r} L_y^{ \widetilde{r}})
    $$
for any pair $(q, r, \widetilde{r}) \in \mathbb{A}_{W} (N, 1)$.
\end{thm}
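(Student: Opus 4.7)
The natural approach is a contraction mapping on the Duhamel reformulation, applied not to $u$ itself but to $v := \langle \partial_y \rangle^{s-1/2} u$. Since $\langle \partial_y \rangle^{s-1/2}$ commutes with $\partial_t^2-\Delta$, equation \eqref{NLW} is equivalent to
\[
(\partial_t^2 - \Delta) v = \langle \partial_y \rangle^{s-1/2} F_p(u), \quad v(0) = \langle \partial_y \rangle^{s-1/2} f, \quad \partial_t v(0) = \langle \partial_y \rangle^{s-1/2} g,
\]
and the hypotheses become the standard conformal-energy statements $v(0)\in \dot H^{1/2}_{x,y}$ and $\partial_t v(0)\in \dot H^{-1/2}_{x,y}$. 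The problem is thereby reduced to a wave equation at the familiar $\dot H^{1/2}$ regularity, with the price that a partial fractional derivative must be redistributed through the nonlinearity.

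\textbf{Function space and linear bound.} Fix $(q,r,\widetilde r)\in\mathbb{A}_W(N,1)$. I plan to iterate in a closed ball of
\[
Y_T := \bigl\{ v : v \in C_t([0,T];\dot H^{1/2}_{x,y}) \cap L^q_t([0,T]; L^r_x L^{\widetilde r}_y) \bigr\},
\]
equipped with the sum of the two displayed norms. The refined mixed-norm Strichartz estimate announced in the introduction is expected to provide
\[
\|v\|_{C_t \dot H^{1/2}_{x,y}} + \|v\|_{L^q_t L^r_x L^{\widetilde r}_y} \lesssim \|v(0)\|_{\dot H^{1/2}} + \|\partial_t v(0)\|_{\dot H^{-1/2}} + \bigl\| \langle\partial_y\rangle^{s-1/2} F_p(u) \bigr\|_{N'(T)},
\]
for a suitable dual mixed-norm space $N'(T)$ governing the Duhamel integral $\int_0^t \sin((t-\tau)\sqrt{-\Delta})(-\Delta)^{-1/2} G(\tau)\,d\tau$.

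\textbf{Nonlinear estimate.} The central task is to bound $\|\langle\partial_y\rangle^{s-1/2} F_p(u)\|_{N'(T)}$ by a power of the $Y_T$-norm of $v$. Since $F_p\in C^1$ with $|F'_p(u)|\lesssim |u|^{p-1}$ by \eqref{as} and the fractional order $s-1/2\in(0,1/2]$ stays in $(0,1)$, the fractional chain rule for $C^1$ nonlinearities (applied in the $y$-variable) yields, fiberwise,
\[
\bigl\|\langle\partial_y\rangle^{s-1/2} F_p(u)\bigr\|_{L^c_y} \lesssim \|u\|_{L^a_y}^{p-1}\, \bigl\|\langle\partial_y\rangle^{s-1/2} u\bigr\|_{L^b_y}.
\]
I would then apply the one-dimensional Sobolev embedding $W^{s-1/2,\widetilde r}_y \hookrightarrow L^a_y$ (nontrivial because $s>1/2$) to rewrite the $u$-factors in terms of $v$, and balance the remaining $x$- and $t$-exponents by H\"older. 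The subcriticality $p<1+\frac{4}{N-2s}$ produces a positive power $T^\theta$ after integrating in time, which both closes the fixed point for small $T$ and makes the map a strict contraction. Uniqueness in the ball and Lipschitz continuity in the data follow by applying the same estimate to $F_p(u_1)-F_p(u_2)$ through $|F'_p|$.

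\textbf{Main obstacle.} The delicate point is the combinatorial matching of exponents: one must select the triple $(q,r,\widetilde r)\in\mathbb{A}_W(N,1)$, the dual space $N'(T)$, and the intermediate H\"older/Sobolev exponents $a,b,c$ so that (i) $N'(T)$ is dual to a triple compatible with the refined Strichartz estimate, (ii) the $y$-Sobolev embedding with derivative order $s-1/2$ correctly transports $\widetilde r$ to $L^a_y$, and (iii) a positive power of $T$ is left over after the time H\"older. The lower bound $p>1+\frac{2}{N-1}$ is precisely the classical wave threshold that ensures the wave-admissibility region is large enough to support such a choice, while $p<1+\frac{4}{N-2s}$ furnishes the subcritical slack yielding the $T^\theta$ gain. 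Once these exponents are laid out, the refined Strichartz estimate and the fractional chain rule do the heavy lifting, and the remainder is standard contraction bookkeeping.
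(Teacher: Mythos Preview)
Your proposal is correct and matches the paper's proof essentially line for line: the paper substitutes $s-\tfrac12\mapsto s$ (equivalent to your passage to $v=\langle\partial_y\rangle^{s-1/2}u$), invokes the refined wave Strichartz estimate at the $\dot H^{1/2}$ level, and handles the nonlinearity via the $y$-fractional chain rule followed by the one-dimensional Sobolev embedding $W_y^{s,\widetilde r_1}\hookrightarrow L_y^{(p-1)(p+1)/2\epsilon_1}$. The explicit exponent selection you flagged as the main obstacle is carried out in the paper's Lemma~4.2 with the choice $r_1=p+1$, $\tfrac{1}{\widetilde r_1}=\tfrac12-\tfrac{\epsilon_1}{p+1}$, and a small $\epsilon_1>0$ calibrated so that $(q_1,r_1,\widetilde r_1)\in\mathbb{A}_W(N,1)$ and $\beta_1=1-\tfrac{p+1}{q_1}>0$; the contraction is then run in the weaker metric $d(u,v)=\sup_{\mathbb{A}_W}\|u-v\|_{L_t^qL_x^rL_y^{\widetilde r}}$ without the $y$-derivative.
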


The class $\mathbb{A}_{W}(N,k)$ here is generally defined for $1\leq k<N$ as
\begin{equation*}\label{mix_wav_pair}
\begin{aligned}
\mathbb{A}_{W}(N,k)
=\Big\{ &(q,r,\widetilde{r}) ~:~ 2< q\leq\infty,~ 2\leq \widetilde{r} \leq r < \infty,\\
~ &\qquad\frac{1}{q} + \frac{N-k}{r} + \frac{k}{\widetilde{r}} =  \frac{N-1}{2},~ \frac{2}{q}+\frac{N-k-1}{r}+\frac{k}{\tilde{r}} \leq \frac{N-1}{2} \Big\},
\end{aligned}
\end{equation*} 	
and the special case when $r=\widetilde{r}$ becomes the usual class of wave $1/2$-admissible pairs\footnote{A pair $(q,r)\in[2,\infty]\times[2,\infty)$ is wave $s$-admissible if 
	$\frac{2}{q}+\frac{N-1}{r}\leq \frac{N-1}{2}$ and $\frac{1}{q} + \frac{N}{r} =  \frac{N}{2}-s$.} except for the endpoint case $q=2$.\\

As a powerful tool for dealing with nonlinear dispersive equations,
Strichartz estimates (\cite{St, GV, GV2, LS, KT}) have been intensively studied over the past few decades.
In our context, the Strichartz estimates will be refined to exploit a different integrability/regularity for each spatial variable, and it would make it possible to assume only partially regularity in our well-posedness theory.

\begin{thm}\label{thm1}
Let $N\ge1$, $1\leq k \leq N$, and $\sigma=1,2$. If
    \begin{equation}\label{sc2}
    \frac{2}{q} \leq  (N-2-k+\sigma) \Big(\frac{1}{2}-\frac{1}{r}\Big) + k\Big(\frac{1}{2}-\frac{1}{\widetilde{r}}\Big),
    \quad 2\leq \widetilde{r} \leq r < \infty,\quad 2< q \leq \infty,
    \end{equation} 	
then we have 
    \begin{equation}\label{21}
    \big\| e^{it(-\Delta)^{\sigma/2}}f \big\|_{L_t^q (\mathbb{R}; L_x^{r} (\mathbb{R}^{N-k}; L_y^{\widetilde{r}}(\mathbb{R}^k) ))}
    \lesssim \|f\|_{\dot{H}_{x, y}^s}
    \end{equation}
under the scaling condition
    \begin{equation}\label{sc} \frac{\sigma}{q}=(N-k)\Big(\frac{1}{2}-\frac{1}{r}\Big)+k\Big(\frac{1}{2}-\frac{1}{\widetilde{r}}\Big)-s.
    \end{equation}
\end{thm}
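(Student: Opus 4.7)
The strategy is to interpolate between the two extreme mixed-norm indices $\widetilde{r}=r$ and $\widetilde{r}=2$. Because $[L^r_xL^2_y,\,L^r_xL^r_y]_{\theta}=L^r_xL^{\widetilde{r}}_y$ with $\frac{1}{\widetilde{r}}=\frac{1-\theta}{2}+\frac{\theta}{r}$, the identity $\frac{1}{2}-\frac{1}{\widetilde{r}}=\theta(\frac{1}{2}-\frac{1}{r})$ makes both \eqref{sc2} and \eqref{sc} affine in $\theta$ once the $\widetilde{r}$-dependence is encoded by this relation. Consequently, complex interpolation (Bergh--L\"ofstr\"om) of the two endpoint estimates, with a parallel interpolation $\frac{1}{q}=\frac{1-\theta}{q_0}+\frac{\theta}{q_1}$ of the $L^q_t$-exponent, will yield \eqref{21} for every admissible triple $(q,r,\widetilde{r})$ with $r$ fixed; varying $r$ then sweeps out the full admissible class.

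At the endpoint $\widetilde{r}=r$, condition \eqref{sc2} reduces to the classical $\sigma$-admissibility $\frac{2}{q_1}\leq(N-2+\sigma)(\frac{1}{2}-\frac{1}{r})$ in full dimension $N$, and \eqref{21} is the standard Strichartz estimate for $e^{it(-\Delta)^{\sigma/2}}$ on $\mathbb{R}^{N}$ --- due to Keel--Tao for $\sigma=2$ and to Ginibre--Velo / Lindblad--Sogge for $\sigma=1$.

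At the endpoint $\widetilde{r}=2$, Plancherel in $y$ combined with two Minkowski swaps (each valid because $r\geq2$ and $q_0\geq2$) yields
\[
\|e^{it(-\Delta)^{\sigma/2}}f\|_{L^{q_0}_tL^r_xL^2_y}
\leq\bigl\|\mathcal{F}_y\,e^{it(-\Delta)^{\sigma/2}}f(\cdot,\eta)\bigr\|_{L^2_\eta L^{q_0}_t L^r_x}.
\]
On each $\eta$-fibre the propagator becomes $e^{it(-\Delta_x+|\eta|^2)^{\sigma/2}}$: a Schr\"odinger evolution on $\mathbb{R}^{N-k}$ times the harmless unimodular factor $e^{-it|\eta|^2}$ when $\sigma=2$, and a Klein--Gordon half-wave on $\mathbb{R}^{N-k}$ of mass $|\eta|$ when $\sigma=1$. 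Applying the standard Strichartz estimate in $\mathbb{R}^{N-k}$ (respectively its Klein--Gordon analogue for $\sigma=1$) and then integrating the resulting weighted bound in $\eta$ restores $\|(|\xi|^2+|\eta|^2)^{s/2}\widehat f\|_{L^2_{\xi,\eta}}=\|f\|_{\dot{H}^s_{x,y}}$. The admissibility at this endpoint, $\frac{2}{q_0}\leq(N-k-2+\sigma)(\frac{1}{2}-\frac{1}{r})$, is exactly \eqref{sc2} at $\widetilde{r}=2$, and the scaling \eqref{sc} is likewise preserved.

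The main technical obstacle is the wave case $\sigma=1$ at this second endpoint: one needs the Klein--Gordon Strichartz for $e^{it\sqrt{-\Delta_x+|\eta|^2}}$ on $\mathbb{R}^{N-k}$ with constants that aggregate correctly, after $L^2_\eta$ integration, into the joint homogeneous norm $\dot{H}^s_{x,y}$. This is handled by rescaling $\xi\mapsto|\eta|\xi$, $t\mapsto|\eta|^{-1}t$ to reduce to mass~$1$ and tracking the resulting powers of $|\eta|$, which turn out to be precisely those encoded by the symbol $(|\xi|^2+|\eta|^2)^{s/2}$. Finally, the non-sharp range $\frac{2}{q}<\alpha$ in \eqref{sc2} follows from the sharp-equality case by a further interpolation with the trivial energy identity $\|e^{it(-\Delta)^{\sigma/2}}f\|_{L^\infty_tL^2_{x,y}}=\|f\|_{L^2}$.
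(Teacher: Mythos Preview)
Your route is genuinely different from the paper's. The paper performs a joint Littlewood--Paley decomposition in $(x,y)$, scales to unit frequency, and proves a mixed-norm \emph{fixed-time} dispersive estimate (Lemma~\ref{lle}) directly via stationary phase on the Hessian of $|(\xi,\eta)|^{\sigma}$ and of $\phi_{1,\eta}(\xi)=\sqrt{|\xi|^2+|\eta|^2}$; the $TT^*$ argument then gives the frequency-localized Strichartz estimate, and summing the Littlewood--Paley pieces recovers $\dot H^s$. Your scheme instead treats the two spatial extremes $\widetilde r=r$ and $\widetilde r=2$ as black boxes (classical Strichartz in $\mathbb{R}^N$, respectively fibrewise Schr\"odinger/Klein--Gordon Strichartz in $\mathbb{R}^{N-k}$ after Plancherel in $y$) and interpolates. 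This is more modular, and for $\sigma=2$ the $\widetilde r=2$ endpoint is indeed a one-line consequence of the $\mathbb{R}^{N-k}$ Schr\"odinger estimate. For $\sigma=1$, however, the required Klein--Gordon Strichartz with $H^{s_0}$ data in the wave-admissible range, while true, is not lighter than what the paper proves: establishing it essentially amounts to the same Hessian computation for $\sqrt{|\xi|^2+1}$ that the paper carries out for $\sqrt{|\xi|^2+|\eta|^2}$.

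There is also a concrete gap. Your final sentence is incorrect: the energy point $(q,r,\widetilde r)=(\infty,2,2)$ satisfies \eqref{sc2} with \emph{equality} ($0=0$), so the sharp hyperplane already contains it, and interpolating any sharp point with energy produces only sharp points---never the strictly non-sharp range $\tfrac{2}{q}<\alpha$. (The non-sharp cases do follow, but by Sobolev embedding in the spatial variables, not by that interpolation.) A related issue is that your $r$-fixing interpolation between $\widetilde r=2$ and $\widetilde r=r$ need not sweep out the full $q$-range of the theorem: if, say, $\alpha_0:=(N-k-2+\sigma)(\tfrac12-\tfrac1r)<1\le(N-2+\sigma)(\tfrac12-\tfrac1r)=:\alpha_1$ (e.g.\ $\sigma=2$, $N=5$, $k=3$, $r=5$), the constraint $q_0,q_1\ge2$ caps $\tfrac{1}{q}$ at $(1-\theta)\tfrac{\alpha_0}{2}+\tfrac{\theta}{2}$, which is strictly below the $\min(\tfrac12,\tfrac{\alpha}{2})$ that the theorem claims. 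The paper's frequency-localized approach avoids both issues at once: because the localized kernel decays like $(1+|t|)^{-\beta_\sigma}$ rather than $|t|^{-\beta_\sigma}$, Young's inequality handles every non-sharp exponent directly.
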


\begin{rem}
	The case $\sigma=2$ and $\sigma=1$ in the theorem correspond to the Schr\"odinger and wave flows, respectively. With this in mind,
	the condition \eqref{sc2} combined with \eqref{sc} implies $\mathbb{A}_{S}(N,k)$ and $\mathbb{A}_{W}(N,k)$ for $s=0$ and $s=1/2$, respectively.
\end{rem}

Particularly when $r=\widetilde{r}$, Theorem \ref{thm1} covers the classical Strichartz estimates except for the endpoint case $q=2$. In fact, we further obtain \eqref{21} for the endpoint case when the first inequality in \eqref{sc2} holds strictly. If not, it can be also obtained by making use of the Keel-Tao's bilinear argument.
(For instance, we refer the reader to  \cite{KLS} in which we  have already obtained it for the Schr\"odinger flow when $N\in 2\mathbb{N}$ and $k=N/2$.)
In an approach using the Littlewood-Paley theorem when proving \eqref{21},
the restriction $r, \tilde{r}<\infty$ also follows.
But when $\sigma=2$, one can find the kernel expression of the Schr\"odinger flow which implies the fixed-time estimates without frequency localization,
and so avoid using the Littlewood-Paley theorem.
Hence the classical Strichartz estimates corresponding to $r=\widetilde{r} =\infty$ could be also covered.
Since all the missing cases mentioned above do not give any improvement to the well-posedness results, we shall omit the details about them.

We close the introduction with a remark.
There are some nonlinear Schr\"odinger equations with partial off-axis variations, and partially regular data are naturally considered in the study of those models. 
Related results were studied in \cite{AAS} (and also more numerically in \cite{AKS}).

\

This paper is organized as follows. 
In Section \ref{se2}, we prove the refined Strichartz estimates (Theorem \ref{thm1}) by utilizing the Littlewood-Paley theorem on mixed norm spaces $L^r(\mathbb{R}^{N-k}; L^{\tilde{r}}(\mathbb{R}^k))$ to their frequency localized version which is obtained from a fixed-time estimate (Lemma \ref{lle}) on the mixed spaces. 
The fixed-time estimate is established in Section \ref{se3} making use of a stationary phase method. 
Finally in Section \ref{se4}, we prove  Theorems \ref{mainThm-S} and \ref{mainThm-W} separately by applying the Strichartz estimates to the well-posedness problems.

Throughout this paper, we use $\mathcal{F}f$ or $\hat{f}$ to denote the Fourier transform of $f$.
We also denote $A \lesssim B$ to mean $A \leq CB$ with unspecified constant $C>0$ which may be different at each occurrence.
For functions $f$, $g$ supported in $\mathbb{R}^{N-k}\times\mathbb{R}^{k}$, $f \ast_y g$ means a convolution with respect to $y$-variable,
that is, $$f \ast_y g (x,y) = \int_{\mathbb{R}^k} f(x, y-y') g(x,y')dy'.$$

\section{Proof of Theorem \ref{thm1}}\label{se2}

Let $\psi : \mathbb{R}^{N} \rightarrow [0, 1]$ be a radial smooth cut-off function supported in $\{(\xi, \eta)\in \mathbb{R}^{N-k} \times \mathbb{R}^{k}\,:\, \frac{1}{2} \leq |(\xi, \eta)| \leq 2 \}$
such that
    $$
    \sum_{j \in \mathbb{Z}} \psi(2^{-j}\xi, 2^{-j}\eta) =1.
    $$
For $j\in \mathbb{Z}$, the Littlewood-Paley operator $P_j$ is then defined by
$$
\widehat{P_j f}(\xi, \eta) = \psi_j(\xi, \eta) \hat{f}(\xi, \eta)
$$
where $\psi_j(\xi, \eta) :=  \psi(2^{-j}\xi, 2^{-j}\eta)$
is supported in $\{ (\xi, \eta) \, :\, 2^{j-1} \leq |(\xi, \eta)| \leq 2^{j+1}\}$.
Now we recall the Littlewood-Paley theorem  (see Corollary 2.4 in \cite{TW}) on mixed Lebesgue spaces $L^{r}_x L_y^{\widetilde{r}}(\mathbb{R}^{N-k} \times \mathbb{R}^k)$; for $1<r, \widetilde{r} <\infty$,
\begin{align*}
\|f\|_{ L_x^{r} L_y^{\widetilde{r}}}
\lesssim \Big\| \sum_{j\in\mathbb{Z}}  \left(| P_{j}  f |^{2} \right)^{1/2} \Big\|_{ L_x^{r} L_y^{\widetilde{r}}}.
\end{align*}

Then to show \eqref{21}, we first apply this to $e^{it(-\Delta)^{\sigma/2}}f$ and
use the Minkowski inequality to conclude 
\begin{align}\label{fre1}
\|e^{it(-\Delta)^{\sigma/2}} f\|_{L_t^{q} L_x^{r} L_y^{\widetilde{r}}}^2
&\lesssim \Big\| \sum_{j\in\mathbb{Z}}  \left(| P_{j} e^{it(-\Delta)^{\sigma/2}} f |^{2} \right)^{1/2} \Big\|_{L_t^{q} L_x^{r} L_y^{\widetilde{r}}} ^2 \nonumber\\
&\lesssim \sum_{j\in\mathbb{Z}} \|e^{it(-\Delta)^{\sigma/2}} P_{j}  f\|_{L_t^q L_x^{r} L_y^{\widetilde{r}}}^2
\end{align}
since $q, r, \widetilde{r} \ge 2$ and $r, \widetilde{r} \neq \infty$.
(Note here that $P_{j}$ is commuted with $e^{it(-\Delta)^{\sigma/2}}$.)
Next we assume for the moment the following frequency localized estimates
	\begin{equation}\label{30}
	\big\| e^{it(-\Delta)^{\sigma/2}} P_j f \big\|_{L_t^q L_x^{r} L_y^{\widetilde{r}}}
	\lesssim 2^{js}\|f\|_{L_{x,y}^2}
	\end{equation}
under the conditions \eqref{sc2} and \eqref{sc}.
If we write $P_j f =P_j \widetilde{P}_j f $ with $\widetilde{P}_j = \sum_{k: |j-k|\leq 1} P_{k}$
and apply \eqref{30} with $f=\widetilde{P}_j f$, then
    \begin{align}\label{fre2}
    \sum_{j\in\mathbb{Z}} \| e^{it(-\Delta)^{\sigma/2}} P_j \widetilde{P}_j f \|_{L_t^q L_x^{r} L_y^{\widetilde{r}}}^2
    \lesssim  \sum_{j\in\mathbb{Z}} 2^{2js}\|\widetilde{P}_j f\|_{L_{x,y}^2}^2
    \lesssim \| f \|_{\dot{H}_{x, y}^s}^2 .
    \end{align}
Combining \eqref{fre1} and \eqref{fre2}, we obtain what we want.

\subsection{Proof of \eqref{30}}
It remains to show the frequency localized estimates \eqref{30}. 
By scaling, we only need to show the case $j=0$,
 \begin{equation}\label{300}
 \bigl\| e^{it(-\Delta)^{\sigma/2}} P_0 f \bigl\|_{L_t^q L_x^{r} L_y^{\widetilde{r}}}
 \lesssim \|f\|_{L_{x,y}^2},
 \end{equation}
under the condition \eqref{sc2}.
Indeed, by the change of variables $2^{-j}\xi \rightarrow \xi$ and $2^{-j}\eta \rightarrow \eta$, 
	\begin{align*}
	e^{it(-\Delta)^{\frac{\sigma}2}} P_j f(x,y)
	&=\frac{1}{(2\pi)^{N}}\int_{\mathbb{R}^{N}} e^{i(2^jx,2^jy)\cdot(\xi, \eta)+ i2^{j\sigma}t |(\xi, \eta)|^{\sigma}} \psi(\xi, \eta)  \hat{f}(2^j\xi, 2^j\eta) 2^{jN} d\eta d\xi \\
	&=e^{i2^{j\sigma}t (-\Delta)^{\sigma/2} } P_0 f_j(2^jx, 2^jy)
	\end{align*}
	where $f_j(x,y):=f(2^{-j}x, 2^{-j}y)$. 
Thus \eqref{300} leads us to 
	\begin{align*}
	\bigl\| e^{it(-\Delta)^{\sigma/2}} P_j f \bigl\|_{L_t^q L_x^{r} L_y^{\widetilde{r}}}
	&=2^{j(-\frac{\sigma}{q}-\frac{N-k}{r}-\frac{k}{\widetilde{r}})}\bigl\|
	e^{it(-\Delta)^{\sigma/2}} P_0 f_j \bigl\|_{L_t^q L_x^{r} L_y^{\widetilde{r}}}\\
	&\lesssim 2^{j(-\frac{\sigma}{q}-\frac{N-k}{r}-\frac{k}{\widetilde{r}})}\|f_j\|_{L_{x,y}^2} \\
	&= 2^{js}\|f\|_{L_{x,y}^2}
	\end{align*}
	under the condition \eqref{sc}, as desired.

Now we shall prove \eqref{300} by using the following lemma which shows 
fixed-time estimates for the propagator $e^{it(-\Delta)^{\sigma/2}}$ on mixed Lebesgue spaces.
We will obtain the lemma in the next section, Section \ref{se3}.

\begin{lem}\label{lle}
Let $N \ge 2$, $1\leq k \leq N$, and $\sigma=1,2$.
Assume that $2 \leq \widetilde{r} \leq r \leq \infty$.
Then for $(x, y)\in \mathbb{R}^{N-k} \times \mathbb{R}^k$
we have
	\begin{equation}\label{d1}
	\| e^{it(-\Delta)^{\sigma/2}} P_0\, g  \|_{L_x^{r} L_y^{\widetilde{r}}}
	\lesssim (1+|t|)^{-\beta_{\sigma}(r, \widetilde{r}) } \| g\|_{L_x^{r'} L_y^{\widetilde{r}'}}
	\end{equation}
where $\beta_{\sigma}(r, \widetilde{r})=(N-k-2+\sigma) (\frac{1}{2}-\frac{1}{r}) + k(\frac{1}{2}-\frac{1}{\widetilde{r}})$.
\end{lem}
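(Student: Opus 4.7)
The plan is to reduce the mixed-norm estimate \eqref{d1} to bounds at the three corners of the region $\{(1/r,1/\widetilde r):0\le 1/r\le 1/\widetilde r\le 1/2\}$ and fill in by interpolation. Write $e^{it(-\Delta)^{\sigma/2}}P_0 g=K_t^\sigma\ast g$ with the frequency-localized kernel
$$K_t^\sigma(x,y)=\int_{\mathbb{R}^N}e^{i(x\cdot\xi+y\cdot\eta)+it|(\xi,\eta)|^\sigma}\psi(\xi,\eta)\,d\xi\,d\eta.$$
The corner $(1/r,1/\widetilde r)=(1/2,1/2)$ is just Plancherel, with decay $0$. The corner $(0,0)$ is the classical $L^1\to L^\infty$ dispersive bound $\|K_t^\sigma\|_{L^\infty_{x,y}}\lesssim(1+|t|)^{-(N-2+\sigma)/2}$, obtained by stationary phase on the full $N$-dimensional oscillatory integral, exploiting the curvature of the paraboloid when $\sigma=2$ and of the cone when $\sigma=1$.

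The key new ingredient is the hybrid corner $(0,1/2)$, namely
$$\|K_t^\sigma\ast g\|_{L^\infty_x L^2_y}\lesssim(1+|t|)^{-(N-k-2+\sigma)/2}\|g\|_{L^1_x L^2_y}.$$
I would write the convolution as a superposition $\int K_t^\sigma(x-x',\cdot)\ast_y g(x',\cdot)\,dx'$, move the $L^2_y$-norm inside the $x'$-integral via Minkowski, and apply Plancherel in $y$ to bound each partial convolution by $\|\mathcal F_y K_t^\sigma(x-x',\cdot)\|_{L^\infty_\eta}\|g(x',\cdot)\|_{L^2_y}$. The partial Fourier transform
$$\mathcal F_y K_t^\sigma(x,\eta)=\int_{\mathbb{R}^{N-k}}e^{ix\cdot\xi+it|(\xi,\eta)|^\sigma}\psi(\xi,\eta)\,d\xi$$
is an $(N-k)$-dimensional oscillatory integral in $\xi$ with $\eta$ as parameter, and stationary phase in $\xi$ should deliver the required uniform bound $(1+|t|)^{-(N-k-2+\sigma)/2}$ in both $x$ and $\eta$.

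With these three vertex estimates, iterated Riesz--Thorin interpolation on Benedek--Panzone mixed Lebesgue spaces fills in the triangle $2\le\widetilde r\le r\le\infty$: one interpolation between $(1/2,1/2)$ and $(0,1/2)$ recovers the edge $\widetilde r=2$, and a second interpolation from each such intermediate point down to the $(0,0)$ vertex sweeps the interior. Since $\beta_\sigma(r,\widetilde r)$ is affine in $(1/r,1/\widetilde r)$ and equals $0$, $(N-k-2+\sigma)/2$, $(N-2+\sigma)/2$ at the three corners, the interpolated exponent matches the claim exactly.

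The main obstacle is expected to be the stationary-phase analysis of $\mathcal F_y K_t^\sigma$ in the wave case $\sigma=1$. There the Hessian in $\xi$ of $|(\xi,\eta)|$ equals $|(\xi,\eta)|^{-1}I-|(\xi,\eta)|^{-3}\xi\xi^T$, whose $N-k-1$ eigenvalues transverse to $\xi$ are of order one on the annulus, but whose remaining eigenvalue $|\eta|^2/|(\xi,\eta)|^3$ along $\xi$ degenerates as $|\eta|\to 0$. One therefore cannot invoke non-degenerate stationary phase uniformly in $\eta$; instead, a Van der Corput argument in the $N-k-1$ transverse directions, combined with a direct integration over the bounded degenerate direction, should still deliver the uniform exponent $(N-k-1)/2$ needed at the hybrid corner.
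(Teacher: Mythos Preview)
Your proposal is correct and follows essentially the same route as the paper: reduce to the three corners $(r,\widetilde r)=(2,2),(\infty,\infty),(\infty,2)$ via Riesz--Thorin on mixed Lebesgue spaces, handle the first by Plancherel, the second by full stationary phase on $K_t^\sigma$, and the hybrid corner by Minkowski plus Plancherel in $y$ followed by stationary phase in $\xi$ on the partial transform $\mathcal F_y K_t^\sigma$. For the degeneration you flag in the wave case, the paper simply invokes a rank-based stationary phase lemma (Stein, VIII, \S5B) after checking that $\operatorname{rank}H_\xi|(\xi,\eta)|\ge N-k-1$ uniformly on the annulus---your Van der Corput in the $N-k-1$ transverse directions plus trivial integration along the degenerate one is exactly how one proves such a rank lemma, so the two arguments coincide.
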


By the standard $TT^{\ast}$ argument, \eqref{300} is equivalent to 
	\begin{equation*}\label{TT*}
	\left\| \int_{\mathbb{R}} e^{i(t-\tau) (-\Delta)^{\sigma/2}} P_0\,g \, d\tau \right\|_{L_t^q L_x^{r} L_y^{\widetilde{r}}} \lesssim \left\| g \right\|_{L_t^{q'} L_x^{r'} L_y^{\widetilde{r}'}},
	\end{equation*}
and then by \eqref{d1} 
    \begin{align*}
    \bigg\| \int_{\mathbb{R}} e^{i(t-\tau) (-\Delta)^{\sigma/2}} P_0\,g \, d\tau \bigg\|_{L_t^q L_x^{r} L_y^{\widetilde{r}}}
    & \leq\bigg\| \int_{\mathbb{R}} \big\| e^{i(t-\tau) (-\Delta)^{\sigma/2}}P_0\,g \big\|_{L_x^{r} L_y^{\widetilde{r}}} d\tau \bigg\|_{L_t^q} \\
    &\lesssim \Big\| (1+|\cdot|)^{-\beta_{\sigma}(r, \widetilde{r})} \ast_t \| g\|_{L_x^{r'} L_y^{\widetilde{r}'}} \Big\|_{L_t^q}.
    \end{align*}
Note that the first condition in \eqref{sc2} equals $2/q\leq \beta_{\sigma}(r, \widetilde{r})$.
When $2/q<\beta_{\sigma}(r, \widetilde{r})$ and $2\leq q \leq \infty$,
we apply Young's inequality to get
    \begin{align*}
    \Big\| (1+|\cdot|)^{-\beta_{\sigma}(r, \widetilde{r})} \ast_t \| g\|_{L_x^{r'} L_y^{\widetilde{r}'}} \Big\|_{L_t^q}
    &\lesssim \| (1+|\cdot|)^{-\beta_{\sigma}(r, \widetilde{r})} \|_{L_t^{\frac{q}{2}}} \| g\|_{L_t^{q'}L_x^{r'} L_y^{\widetilde{r}'}} \\
    &\lesssim  \| g\|_{L_t^{q'}L_x^{r'} L_y^{\widetilde{r}'}}.
    \end{align*}
In the borderline $2/q =\beta_{\sigma}(r, \widetilde{r})$ with $2< q < \infty$,
we apply the Hardy-Littlewood-Sobolev inequality (\cite{S2}, VIII, Section 4.2) 
for one-dimension\footnote{$\big\| |\cdot|^{-\alpha} \ast f\big\|_{L^q} \lesssim \|f\|_{L^{p}}$ holds
for $0 < \alpha <1$, $1 \leq p < q < \infty $ and $ 1/q+1=1/p+\alpha$.},
to get
    \begin{align*}
    \Big\| (1+|\cdot|)^{-\beta_{\sigma}(r, \widetilde{r})} \ast_t \| g\|_{L_x^{r'} L_y^{\widetilde{r}'}} \Big\|_{L_t^q}
    &\lesssim \Big\|   |\cdot|^{-\beta_{\sigma}(r, \widetilde{r})} \ast_t \| g\|_{L_x^{r'} L_y^{\widetilde{r}'}} \Big\|_{L_t^q} \\
    &\lesssim \| g\|_{L_t^{q'}L_x^{r'} L_y^{\widetilde{r}'}}.
    \end{align*}
If $q=\infty$, $\beta_{\sigma}(r, \widetilde{r})=0$ and so $r=\widetilde{r}=2$. In this case 
\eqref{d1} clearly holds by Plancherel's theorem.
The proof is now complete.

\section{Fixed-time estimates}\label{se3}
In this section we prove Lemma \ref{lle}.
By the Riesz-Thorin interpolation theorem \cite{BL},
we only need to obtain \eqref{d1} for the following three cases:
	\begin{itemize}
		\item[(a)] $r=\widetilde{r}=2$,
		\item[(b)] $r=\widetilde{r}=\infty$,
		\item[(c)] $r=\infty$ and $\widetilde{r}=2$.
	\end{itemize}

The case $(a)$ clearly follows from Planchrel's theorem. 
For the case $(b)$ we first write
	\begin{align*}
	e^{it (-\Delta)^{\sigma/2}} P_0\,g(x,y)
	&= \int_{\mathbb{R}^{N-k}} \int_{\mathbb{R}^{k}} K_{ \sigma}(x-x', y-y',t) g(x', y') dy' dx' \\
	&= K_{\sigma} \ast_{x, y} g,
	\end{align*}
	where
	\begin{equation}\label{ker}
	K_{\sigma}(x,y,t)
    = \frac{1}{(2\pi)^{N}} \int_{\mathbb{R}^{N-k}} \int_{\mathbb{R}^{k}} e^{i(x,y)\cdot(\xi,\eta)} e^{it|(\xi,\eta)|^{\sigma}} \psi(\xi,\eta)\, d\eta d\xi,
	\end{equation}
and use Young's inequality to see
	\begin{align}\label{b}
	\bigl\|e^{it (-\Delta)^{\sigma/2}} P_0\,g\bigl\|_{L_{x,y}^{\infty}}
	=\bigl\| K_{ \sigma} \ast_{x, y} g \bigl\|_{L_{x,y}^{\infty}}
	\leq \| K_{ \sigma}\|_{L_{x,y}^{\infty}} \| g \|_{L_{x,y}^{1}}.
	\end{align}
To bound the kernel $K_{\sigma}$ here,
we shall make use of the following stationary phase method (see \cite{S2}, VIII, Section 5, B).

	\begin{lem}\label{lem4}
		Let $H$ be the Hessian matrix given by $(\frac{\partial^2 }{\partial \xi_i \partial \xi_j})$.
		Suppose that $\psi$ is a compactly supported smooth function on $\mathbb{R}^N$ and $\phi$ is a smooth function satisfying rank $H\phi \ge k$ on the support of $\psi$.
		Then, for $(x, t) \in \mathbb{R}^{N+1}$
		\begin{equation*}\label{sta}
		\left| \int e^{i(x, t) \cdot (\xi, \phi(\xi))} \psi(\xi) d\xi \right| \leq C (1+|(x, t)|)^{-k/2}.
		\end{equation*}
	\end{lem}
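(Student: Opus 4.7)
The plan is to derive the bound from the stationary-phase principle after normalizing the direction by the parameter $\lambda := |(x,t)|$. For $\lambda \lesssim 1$ the bound is trivial since $|I(x,t)| \le \|\psi\|_{L^1}$, so I assume $\lambda \ge 1$ and write $(\omega_1, \omega_2) = (x,t)/\lambda$, so that $|(\omega_1,\omega_2)|=1$. The integral becomes $\int e^{i\lambda \Phi(\xi)}\psi(\xi)\,d\xi$ with $\Phi(\xi) = \omega_1\cdot\xi + \omega_2\,\phi(\xi)$, and the task reduces to proving $|I(x,t)| \lesssim \lambda^{-k/2}$ with a constant uniform in $(\omega_1,\omega_2) \in S^N$.

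First I would dispose of the non-stationary regime $|\omega_2| \le \delta$, with $\delta := (4\sup_{\mathrm{supp}\,\psi} |\nabla\phi|)^{-1}$. Here $|\omega_1| \ge 1/2$ and
\[
|\nabla_\xi \Phi(\xi)| \ge |\omega_1| - |\omega_2|\,|\nabla\phi(\xi)| \ge 1/4
\]
uniformly on $\mathrm{supp}\,\psi$. Iterating the standard non-stationary-phase operator $L = -i|\nabla\Phi|^{-2}\,\overline{\nabla\Phi}\cdot\nabla$ any number $M$ of times yields $|I(x,t)| \le C_M \lambda^{-M}$, which for $M \ge k/2$ dominates the target $\lambda^{-k/2}$.

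In the stationary regime $|\omega_2| \ge \delta$ I would factor out $\mu := \lambda\omega_2$ and rewrite the phase as $\mu\,\tilde\Phi(\xi)$, where
\[
\tilde\Phi(\xi) = \frac{\omega_1}{\omega_2}\cdot \xi + \phi(\xi), \qquad |\mu| \ge \delta\lambda, \qquad \Big|\frac{\omega_1}{\omega_2}\Big| \le \frac{1}{\delta}.
\]
The crucial observation is that $H\tilde\Phi = H\phi$ still has rank $\ge k$ on $\mathrm{supp}\,\psi$, since the added linear term has vanishing Hessian; moreover the auxiliary parameter $v := \omega_1/\omega_2$ now lives in a compact ball. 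Applying the classical single-parameter rank-$k$ stationary phase lemma (Stein \cite{S2}, Chapter VIII, §5, B) to $\tilde\Phi$ with phase parameter $\mu$ then gives $|I(x,t)| \lesssim |\mu|^{-k/2} \lesssim \lambda^{-k/2}$, as required.

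The step I expect to be most delicate is ensuring that the constant in the stationary regime is uniform in the auxiliary parameter $v$. The classical argument proceeds via a finite partition of unity on $\mathrm{supp}\,\psi$; on each piece one selects $k$ coordinates in which $H\phi$ is non-degenerate, invokes the Morse lemma in those coordinates (with the remaining $N-k$ coordinates as parameters) to reduce $\phi$ locally to $Q(\xi') + R(\xi'')$ with $Q$ a non-degenerate quadratic form in $\xi'\in\mathbb{R}^k$, and concludes by Fubini together with the standard $k$-dimensional Gaussian/Van der Corput bound. Because the perturbation $v\cdot\xi$ is linear and has vanishing Hessian, none of the Morse-lemma changes of variable are disturbed, and the resulting constants depend continuously on $v$; compactness of $\{|v| \le 1/\delta\}$ then yields the required uniform bound, completing the proof.
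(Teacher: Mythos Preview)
The paper does not prove this lemma at all; it is quoted as a black box from Stein (\cite{S2}, VIII, \S5, B) and then applied. Your proposal, by contrast, supplies an actual derivation, reducing the $(x,t)$-statement to the single-parameter rank-$k$ stationary-phase bound in Stein via the standard splitting into a non-stationary regime (small $|\omega_2|$, handled by repeated integration by parts) and a stationary regime (bounded $v=\omega_1/\omega_2$, handled by Stein's proposition with uniformity over the compact parameter set). This is the canonical route and is correct in substance; the uniformity discussion in your last paragraph is exactly the point that needs care, and your sketch of it (Morse lemma unaffected by the linear perturbation, continuity in $v$ plus compactness) is sound.

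One small slip: with $\delta := (4\sup_{\mathrm{supp}\,\psi}|\nabla\phi|)^{-1}$ there is no guarantee that $\delta<1$, so the implication ``$|\omega_2|\le\delta \Rightarrow |\omega_1|\ge 1/2$'' can fail when $\nabla\phi$ happens to be small on $\mathrm{supp}\,\psi$. Replacing $\delta$ by $\min\{1/2,\,(4\sup|\nabla\phi|)^{-1}\}$ repairs this; the non-stationary bound then uses $|\omega_1|\ge\sqrt{3}/2$ and $|\omega_2\nabla\phi|\le 1/4$ exactly as you wrote, and in the stationary regime one still has $|\mu|\ge\delta\lambda$ and $|v|\le 1/\delta$. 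With that adjustment the argument is complete.
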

	
	Indeed, take a function $\phi(\xi, \eta)=|(\xi, \eta)|^{\sigma}$ defined on 
	$\mathbb{R}^{N-k} \times \mathbb{R}^k$, and note that 
	 rank $H\phi \ge N-1$ for $\sigma=1$ and rank $H\phi \ge N$ for $\sigma=2$,
	 on the support of $\psi$ in \eqref{ker} which is 
	 $\{(\xi, \eta)\in \mathbb{R}^{N-k} \times \mathbb{R}^k :\, 1/2<|(\xi, \eta)| < 2\}$.
	Now apply the lemma with $k=N-2+\sigma$ to \eqref{ker} as
	\begin{align}\label{esk}
	|K_{\sigma}(x, y, t)|
	&= \frac{1}{(2\pi)^{N}}\left| \int_{\mathbb{R}^{N-k}} \int_{\mathbb{R}^{k}} e^{i(x,y, t)\cdot(\xi,\eta, |(\xi, \eta)|^{\sigma})} \psi(\xi,\eta)\, d\eta d\xi \right| \nonumber \\
	&\lesssim (1+|(x,y,t)|)^{-\frac{N-2+\sigma}{2}}\nonumber\\
	&\lesssim (1+|t|)^{-\frac{N-2+\sigma}{2}}.
	\end{align}
Notice here that $\beta_{\sigma}(\infty, \infty)=(N-2+\sigma)/2\geq0$.
Consequently, by \eqref{b} and \eqref{esk}, we get the case $(b)$.
	
For the case $(c)$, we use Minkowski's inequality and Plancherel's theorem with respect to $y$ to see
    \begin{align}\label{3a}
    \bigl\|e^{it (-\Delta)^{\sigma/2}} P_0\,g\bigl\|_{L_x^{\infty} L_y^2}
    &= \big\| \| K_{ \sigma} \ast_{x,y} g \|_{L_y^2} \big\|_{L_x^{\infty}}\nonumber\\
    &\leq  \bigg\|  \int_{\mathbb{R}^{N-k}} \big\| K_{ \sigma} (x-x',\cdot) \ast_y g(x',\cdot) \big\|_{L_y^2} \, dx' \bigg\|_{L_x^{\infty}}\nonumber\\
    &=  \bigg\|  \int_{\mathbb{R}^{N-k}} \big\| \widetilde{K}_{\sigma} (x-x',\cdot) \tilde{g}(x',\cdot) \big\|_{L_{\eta}^2} \, dx' \bigg\|_{L_x^{\infty}}.
    \end{align}
Here, $\tilde{g} =\mathcal{F}_y ( g(x, \cdot))$ denotes the spatial Fourier transform in the variable $y \in \mathbb{R}^k$ and similarly
	\begin{align*}
	\widetilde{K}_{\sigma}(x, \eta,t)
	=\frac{1}{(2\pi)^{N}}\int_{\mathbb{R}^{N-k}}e^{ix\cdot\xi} e^{it|(\xi,\eta)|^{\sigma}} \psi(\xi,\eta)   d\xi.
	\end{align*}

Applying Lemma \ref{lem4} with phase functions 
$\phi_{\sigma,\eta} (\xi)=|(\xi, \eta)|^{\sigma}$ for fixed $|\eta| \leq 2$, we now get
	\begin{equation}\label{3b}
	\sup_{|\eta|\leq2}| \widetilde{K}_{\sigma}(x, \eta, t) |\lesssim (1+|(x, t)|)^{-\frac{N-k-2+\sigma}{2}};
	\end{equation}
first calculate the $(N-k)\times(N-k)$ Hessian matrix of $\phi_{\sigma,\eta}$ as
    $$
    H \phi_{2,\eta} (\xi) =
    \begin{pmatrix}
    2    &   0 &   0 \\
    0    &   \ddots    &   0 \\
    0    &   0   &  2
    \end{pmatrix} 
    $$
and
    $$
    H \phi_{1,\eta} (\xi) = \frac{1}{(|\xi|^2 +|\eta|^2)^{3/2}}
    \begin{pmatrix}
    |\xi|^2 + |\eta|^2 - \xi_1^2   &   \cdots  &   -\xi_1 \xi_{N-k} \\
    \vdots     &   \ddots    &   \vdots \\
    -\xi_1 \xi_{N-k}    &   \cdots &   |\xi|^2 + |\eta|^2 - \xi_{N-k}^2
    \end{pmatrix} .
    $$
Then clearly rank $H \phi_{2,\eta} (\xi) = N-k$. For the latter case, consider 
    $$
    H \phi_{1,\eta} (\xi_1,\cdots,0) = \frac{1}{(\xi_1^2 +|\eta|^2)^{3/2}}
    \begin{pmatrix}
    |\eta|^2   &   \cdots  &   0 \\
    \vdots     &   \ddots    &   \vdots \\
    0    &   \cdots &   \xi_1^2 + |\eta|^2
    \end{pmatrix}
    $$
since the phase function $\phi_{1,\eta} (\xi)$ is rotational invariant,
and conclude rank $H \phi_{1,\eta} (\xi) \geq N-k-1$ for all $|\eta| \leq 2$
since at least one of $|\xi_1|$ and $|\eta|$ is away from zero under $1/2 \leq |(\xi,\eta)| \leq 2$.

Now by \eqref{3b} and Young's inequality, we bound the right-hand side of \eqref{3a} as
	\begin{align}\label{3c}
	\bigg\|\,  \int_{\mathbb{R}^{N-k}} \| \widetilde{K}_{\sigma} (x-x',\cdot) \tilde{g}(x',\cdot) \|_{L_{\eta}^2} \, dx' \bigg\|_{L_x^{\infty}}
	& \lesssim \bigg\|\,  (1+|(\cdot, t)|)^{-\frac{N-k-2+\sigma}{2}} \ast_x \| \tilde{g} \|_{L_{\eta}^2}  \bigg\|_{L_x^{\infty}} \nonumber\\
	&\lesssim  (1+|t|)^{-\frac{N-k-2+\sigma}{2}} \big\| \| \tilde{g} \|_{L_{\eta}^{2}} \big\|_{L_x^{1}} \nonumber\\
	&\leq (1+|t|)^{-\frac{N-k-2+\sigma}{2}}  \|g \|_{L_x^{1}L_y^2}.
	\end{align}
Note here that  $\beta_{\sigma}(\infty, 2)=\frac{N-k-2+\sigma}{2}\geq0$.
Combining \eqref{3a} and \eqref{3c} finally gives the case $(c)$,
	\begin{equation*}
	\bigl\|e^{it (-\Delta)^{\sigma/2}} P_0\,g\bigl\|_{L_x^{\infty} L_y^2}
	\lesssim  (1+|t|)^{-\beta_{\sigma}(\infty, 2)}  \|g \|_{L_x^{1}L_y^2}.
	\end{equation*}

\section{Well-posedness}\label{se4}
In this section we prove Theorems \ref{mainThm-S} and \ref{mainThm-W} by making use of the refined Strichartz estimates \eqref{21}.

We shall first mention some simple facts about the nonlinearity $F_p(u)$ to be used in the proof:
\begin{equation}\label{nonli}
|F_p(u) - F_p(v) |\leq C \left( |u|^{p-1} + |v|^{p-1} \right) |u-v|
\end{equation}
and 
\begin{equation}\label{frac_chainrule}
	\| | \nabla |^s F_p(u)\|_{L^{c}} \leq C\| u\|_{L^{a(p-1)}}^{p-1} \| |\nabla |^s u\|_{L^{b}}
\end{equation}
where $s\in (0,1]$, $1<a, b, c <\infty$ and $1/c =1/a +1/b$.
Using \eqref{as}, the former is shown by
\begin{align*}
	F_p(u) - F_p(v) 
	& =  \int_0^1 \frac{d}{d\tau} F_p (\tau u + (1-\tau)v) d\tau \nonumber\\
	& =  \int_0^1 (u-v) \cdot  F'_p (\tau u + (1-\tau)v) d\tau,
\end{align*}
while the latter is an immediate consequence of the following fractional chain rule
(see \cite{Ka2}, Lemma A2).

\begin{lem}\label{fractional chain rule}
	Let $F \in C^1 (\mathbb{C};\mathbb{C})$ with $F(0)=0$ and $|F'(z)|\leq|z|^{k-1}$
	for $z\in\mathbb{C}$ and $k\geq1$.	
	If $0\leq s\leq1$, then
	$$
	\|| \nabla |^{s} F(u)\|_{L^c} \leq C \|F'(u)\|_{L^a} \|| \nabla |^{s}u\|_{L^b}
	$$
for $1<a, b, c<\infty$ and $1/c=1/a+1/b$. 
\end{lem}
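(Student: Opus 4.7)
The plan is to reduce the fractional chain rule to standard harmonic analysis machinery via a Littlewood--Paley decomposition. The case $s=1$ is the classical chain rule $\nabla F(u) = F'(u) \nabla u$ combined with H\"older's inequality using $1/c = 1/a + 1/b$, so the real work lies in the range $0 < s < 1$, which I would treat by a paraproduct-type argument. The case $s=0$ is immediate from the hypothesis $|F'(z)| \leq |z|^{k-1}$ and H\"older.

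First I would invoke the Littlewood--Paley characterization of the homogeneous Sobolev norm,
$$
\|\,|\nabla|^{s} v\|_{L^{c}} \sim \Big\| \big( \sum_{j \in \mathbb{Z}} 2^{2js} |P_{j} v|^{2} \big)^{1/2} \Big\|_{L^{c}},
$$
valid for $1 < c < \infty$, applied to $v = F(u)$. Then I would decompose $F(u)$ as the telescoping sum $F(u) = \sum_{j} \bigl( F(u_{\leq j}) - F(u_{\leq j-1}) \bigr)$ with $u_{\leq j} := \sum_{k \leq j} P_{k} u$, and use the fundamental theorem of calculus to rewrite each increment as
$$
F(u_{\leq j}) - F(u_{\leq j-1}) = \Big( \int_{0}^{1} F'\bigl(u_{\leq j-1} + \tau P_{j} u\bigr) \, d\tau \Big) P_{j} u .
$$
Applying a further Littlewood--Paley projection, Bernstein's inequality, and the vector-valued Fefferman--Stein maximal inequality to recombine the square function, then H\"older with the exponent split $1/c = 1/a + 1/b$, would yield the stated bound.

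The main obstacle is that $F'$ is evaluated at the truncated argument $u_{\leq j-1} + \tau P_{j} u$ rather than at $u$ itself, while the right-hand side of the desired inequality features $\|F'(u)\|_{L^{a}}$ precisely. The hypothesis $|F'(z)| \lesssim |z|^{k-1}$ with $k \geq 1$ is essential here: combined with the pointwise bound $|u_{\leq j}(x)| \lesssim Mu(x)$ for the Hardy--Littlewood maximal function, it lets one dominate $|F'(u_{\leq j-1} + \tau P_{j} u)(x)|$ by a maximal function that can, in turn, be compared to $F'(u)$ up to constants after H\"older.

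As a cleaner alternative for $0 < s < 1$, I would consider using the Strichartz-type difference characterization of the Sobolev norm together with the pointwise mean-value bound
$$
|F(u(x+h)) - F(u(x))| \leq |u(x+h) - u(x)| \int_{0}^{1} \bigl| F'\bigl(\tau u(x+h) + (1-\tau) u(x)\bigr) \bigr| \, d\tau,
$$
and then transfer the composition factor into $F'(u)$ through a maximal function estimate before applying H\"older in $x$. In either route the proof is essentially a packaging of Littlewood--Paley, the fundamental theorem of calculus, and Fefferman--Stein.
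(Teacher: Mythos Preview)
The paper does not supply its own proof of this lemma; it is quoted from \cite{Ka2}, Lemma~A2, and in the applications only the weaker consequence \eqref{frac_chainrule} is ever used, where $\|F'(u)\|_{L^{a}}$ has already been replaced by $\|u\|_{L^{a(p-1)}}^{p-1}$ via the hypothesis $|F'(z)|\le|z|^{p-1}$. So there is no in-paper argument to compare against.

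Your Littlewood--Paley/telescoping outline is the standard route and would indeed establish that weaker consequence, but there is a genuine gap at exactly the point you label the ``main obstacle,'' and your proposed fix does not work. From $|F'(z)|\le|z|^{k-1}$ and $|u_{\le j}|\lesssim Mu$ you get $|F'(u_{\le j-1}+\tau P_{j}u)|\lesssim (Mu)^{k-1}$, and after Fefferman--Stein and H\"older this places $\|u\|_{L^{a(k-1)}}^{k-1}$ on the right-hand side, not $\|F'(u)\|_{L^{a}}$. Your claim that one can then ``compare to $F'(u)$ up to constants'' is unjustified: the hypothesis gives only an \emph{upper} bound on $|F'|$, so the available inequality $\|F'(u)\|_{L^{a}}\le\|u\|_{L^{a(k-1)}}^{k-1}$ points the wrong way. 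The same obstruction arises in your difference-quotient alternative, and it already invalidates your $s=0$ claim (take $F\in C^{1}$ with $F(z_{0})\neq0$, $F'(z_{0})=0$, $|F'(z)|\le|z|$, and $u\equiv z_{0}$ on a ball of radius $R$; the ratio of the two sides blows up as $R\to\infty$). To obtain the sharp form with $\|F'(u)\|_{L^{a}}$ for $0<s<1$ one must organize the decomposition so that $F'(u)$ itself, or $M(F'(u))$, appears directly in the pointwise estimate for the Littlewood--Paley pieces, rather than detouring through $(Mu)^{k-1}$; that refinement is what your sketch is missing.
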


\subsection{Schr\"odinger case}
Now we prove Theorem \ref{mainThm-S} based on the contraction mapping principle.
By Duhamel's principle, the solution map of \eqref{NLS} is given as
\begin{equation*}
	\Phi(u)=e^{it \Delta} f -i \int_0^t e^{i(t-\tau)\Delta } F_p(u)\, d\tau.
\end{equation*}
For $0<s \leq 1$ and suitable values of $T, A>0$, it suffices to prove that $\Phi$ defines a contraction map on
\begin{align*}
	X(T, A)=  \big\lbrace  u \in C([0,T]; L^2_x H^s_y)  \cap L_t^q ([0,T] &;L_x^{r} W_y^{s, \widetilde{r}}):\\
	 &\sup_{(q, r, \widetilde{r}) \in \mathbb{A}_{S}(N,2)}\| u \|_{L_t^{q}([0,T]; L_x^r W_y^{s,\widetilde{r}})} \leq A \big\rbrace
\end{align*}
equipped with the distance
$$d(u, v)= \sup_{(q, r, \widetilde{r}) \in \mathbb{A}_{S}(N,2)} \|u-v\|_{L_t^{q}(I; L_x^r L_y^{\widetilde{r}})}.$$

For the proof some preparation is needed.
Let $x\in\mathbb{R}^{N-2}$ and $y\in\mathbb{R}^{2}$.
For any pairs $(q,r,\widetilde{r}) \in \mathbb{A}_{S}(N,2)$, it then follows from
\eqref{21} with $\sigma=2$, $k=2$ and $s=0$ that
\begin{equation}\label{hstr_Sch}
\|e^{it\Delta} f\|_{L_t^q (I ; L_x^rL_y^{\widetilde{r}})}
\leq C \|f\|_{L^2(\mathbb{R}^N)}
\end{equation}
for any time interval $I=[0, T]$.
Furthermore, this yields the inhomogeneous estimates
\begin{equation}\label{istr_Sch}
\left\|\int_0^t e^{i(t-\tau)\Delta} F_p(u)\, d\tau \right\|_{L_t^q (I ; L_x^rL_y^{\widetilde{r}} )}
\leq C \|F_p\|_{L_t^{a'} (I ; L_x^{b'}L_y^{\widetilde{b}'})}
\end{equation}
for any pairs $(q,r,\widetilde{r}), (a,b,\widetilde{b})  \in \mathbb{A}_{S}(N,2)$
by the standard $TT^{\ast}$ argument and the Christ-Kiselev Lemma \cite{CK}.
We also need the following nonlinear estimates.
\begin{lem}\label{lem1}
Let $N\geq 3$, $0< s \leq 1$ and $1< p<1+\frac{4}{N-2s}$.
For any time interval $I=[0,T]$, there are a pair $(q_0,r_0,\widetilde{r}_0) \in \mathbb{A}_{S}(N,2)$ and $\beta_0(N,p,s)>0$ such that
\begin{equation}\label{no1-s}
\| \langle \nabla_y \rangle^s F_p(u)\|_{L_t^{q_0'} (I; L_x^{r_0'} L_y^{\widetilde{r}_0'}) }
\leq C T^{\beta_0(N,p,s) } \| u\|_{L_t^{q_0}(I;L_x^{r_0} W_y^{s,\widetilde{r}_0})}^p
\end{equation}
and
\begin{equation}\label{no2-s}
\begin{aligned}
&\| F_p(u)-F_p(v)\|_{L_t^{q_0'} (I; L_x^{r_0'} L_y^{\widetilde{r}_0'})} \\
&\leq C T^{\beta_0(N,p,s) } \Big( \| u\|_{L_t^{q_0}(I;L_x^{r_0} W_y^{s,\widetilde{r}_0})}^{p-1}  +\| v\|_{L_t^{q_0}(I;L_x^{r_0} W_y^{s,\widetilde{r}_0})}^{p-1} \Big)
\|u-v\|_{L_t^{q_0}(I;L_x^{r_0} L_y^{\widetilde{r}_0})} .
\end{aligned}
\end{equation}
\end{lem}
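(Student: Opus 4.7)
The plan is to bound both left-hand sides by a product of $p$ (or $p-1$ plus one linear) copies of the right-hand side norms via repeated Hölder inequalities in the three variables $t$, $x$, $y$, using the fractional chain rule (Lemma \ref{fractional chain rule}) in the $y$-variable and a Sobolev embedding on $\mathbb{R}^2_y$. The positive power $T^{\beta_0}$ will emerge from slack in the time Hölder inequality, which is produced by the strict subcritical condition $p<1+\frac{4}{N-2s}$.

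My concrete choice is $r_0=p+1$, with $\widetilde{r}_0$ in an interval to be determined and $q_0$ fixed by the admissibility identity $\frac{2}{q_0}+\frac{N-2}{r_0}+\frac{2}{\widetilde{r}_0}=\frac{N}{2}$. For \eqref{no1-s}, I would split $\|\langle\nabla_y\rangle^s F_p(u)\|_{L_y^{\widetilde{r}_0'}}\lesssim \|F_p(u)\|_{L_y^{\widetilde{r}_0'}}+\||\nabla_y|^s F_p(u)\|_{L_y^{\widetilde{r}_0'}}$ and handle the derivative piece by Lemma \ref{fractional chain rule} in $y$ together with $|F_p'(u)|\lesssim|u|^{p-1}$ (from \eqref{as}), producing a bound of the shape
$$
\|u\|_{L_y^{(p-1)\widetilde{r}_0/(\widetilde{r}_0-2)}}^{p-1}\,\||\nabla_y|^s u\|_{L_y^{\widetilde{r}_0}}.
$$
Sobolev embedding $W_y^{s,\widetilde{r}_0}(\mathbb{R}^2)\hookrightarrow L_y^{(p-1)\widetilde{r}_0/(\widetilde{r}_0-2)}$ then absorbs the first factor into $\|u\|_{W_y^{s,\widetilde{r}_0}}^{p-1}$, which is valid precisely when $\widetilde{r}_0\ge \frac{2(p+1)}{2+s(p-1)}$. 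A Hölder in $x$ with exponents $r_0$ and $r_0/(r_0-2)$ (chosen so that the latter multiplied by $p-1$ equals $r_0$, which is exactly why we take $r_0=p+1$) collapses the $L_x^{r_0'}L_y^{\widetilde{r}_0'}$ norm to $\|u\|_{L_x^{r_0}W_y^{s,\widetilde{r}_0}}^{p}$. The plain piece $\|F_p(u)\|_{L_y^{\widetilde{r}_0'}}$ is treated identically without invoking the chain rule. A final Hölder in time then yields the prefactor $T^{\beta_0}$ with $\beta_0=(q_0-p-1)/q_0$, which is positive provided $q_0>p+1$.

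The difference estimate \eqref{no2-s} proceeds along the same lines, except that \eqref{nonli} replaces the fractional chain rule: writing $|F_p(u)-F_p(v)|\lesssim(|u|^{p-1}+|v|^{p-1})|u-v|$, the same Hölder-plus-Sobolev chain distributes $p-1$ powers on $u$ or $v$ (bounded in $W_y^{s,\widetilde{r}_0}$ via Sobolev) and one power on $u-v$ (kept in $L_y^{\widetilde{r}_0}$, as in the statement).

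The main obstacle is the simultaneous feasibility of the Sobolev condition $\widetilde{r}_0\ge \frac{2(p+1)}{2+s(p-1)}$ and the time-positivity condition $q_0>p+1$, which — through the scaling relation of $\mathbb{A}_S(N,2)$ with $r_0=p+1$ — translates to $\widetilde{r}_0<\frac{4(p+1)}{N(p-1)}$. Their compatibility $\frac{2(p+1)}{2+s(p-1)}<\frac{4(p+1)}{N(p-1)}$ simplifies to $(N-2s)(p-1)<4$, i.e., exactly the hypothesis $p<1+\frac{4}{N-2s}$. Verifying the remaining admissibility constraints $2\le\widetilde{r}_0\le r_0$ and $2<q_0\le\infty$ is then a routine case distinction according to whether $p\lessgtr 1+\frac{4}{N}$.
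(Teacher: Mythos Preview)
Your approach is correct and essentially the same as the paper's: both take $r_0=p+1$, apply the fractional chain rule (Lemma~\ref{fractional chain rule}) in the $y$-variable together with H\"older in $x,t$ and the 2D Sobolev embedding $W_y^{s,\widetilde r_0}\hookrightarrow L_y^{(p-1)\widetilde r_0/(\widetilde r_0-2)}$, and extract the positive power of $T$ from slack in the time H\"older. The only cosmetic difference is that the paper parametrizes the choice of $\widetilde r_0$ via an auxiliary $\epsilon_0>0$ (setting $\tfrac{1}{\widetilde r_0}=\tfrac12-\tfrac{\epsilon_0}{p+1}$), whereas you work directly with $\widetilde r_0$; the resulting feasibility constraints and the exponent $\beta_0=1-\tfrac{p+1}{q_0}$ coincide.
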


\begin{proof}
Let $N\geq 3$ and $0< s \leq 1$.
For fixed $1< p<1+\frac{4}{N-2s}$, we take a small $\epsilon_0>0$ such that
$$
\frac{(1-s)(p-1)}{2} < \epsilon_0 < \min\Big\{ \frac{N-2}{4}\Big( 1+\frac{4}{N-2} -p \Big),~ \frac{p-1}{2} \Big\},
$$
and then take a pair $(q_0, r_0, \widetilde{r}_0)$ as
\begin{equation}\label{holl}
\frac{1}{r_0}= \frac{1}{p+1}, \quad
\frac{1}{\widetilde{r}_0}= \frac{1}{2} -\frac{\epsilon_0}{p+1}, \quad
\frac{1}{q_0}= \frac{N-2}{4} - \frac{N-2}{2(p+1)} +\frac{\epsilon_0}{p+1}
\end{equation}
so that $(q_0, r_0, \widetilde{r}_0) \in \mathbb{A}_{S}(N,2)$.

Now we note that
\begin{equation*}
	\|\langle \nabla_y \rangle^s F_p(u)\|_{L_t^{q_0'} (I; L_x^{r_0'} L_y^{\widetilde{r}_0'}) }
	\lesssim \|F_p(u)\|_{L_t^{q_0'} (I; L_x^{r_0'} L_y^{\widetilde{r}_0'}) } +\||\nabla_y|^s F_p(u)\|_{L_t^{q_0'} (I; L_x^{r_0'} L_y^{\widetilde{r}_0'}) }. 
\end{equation*}
By \eqref{as} and H\"older's inequality with the first two ones in \eqref{holl}, the first term in the right side above is bounded as  
\begin{align*}
	\|F_p(u)\|_{L_t^{q_0'} (I; L_x^{r_0'} L_y^{\widetilde{r}_0'}) } 
	&\lesssim \|\chi_I(t)|u|^{p-1}|u|\|_{L_t^{q_0'} (I; L_x^{r_0'} L_y^{\widetilde{r}_0'}) } \\
	&\leq C T^{1-\frac{p+1}{q_0}} \|u\|_{L_t^{q_0}(I;L_x^{r_0} L_y^{(p-1)(p+1)/2\epsilon_0})}^{p-1}\|u\|_{L_t^{q_0}(I;L_x^{r_0}L_y^{\widetilde{r}_0})}.
\end{align*}
For the second term, we use \eqref{frac_chainrule} and then apply the H\"older inequality as before to get
$$
\||\nabla_y|^s F_p(u)\|_{L_t^{q_0'} (I; L_x^{r_0'} L_y^{\widetilde{r}_0'}) }
\leq C T^{1-\frac{p+1}{q_0}} \|u\|_{L_t^{q_0}(I;L_x^{r_0} L_y^{(p-1)(p+1)/2\epsilon_0})}^{p-1} \|u\|_{L_t^{q_0}(I;L_x^{r_0}W_y^{s, \widetilde{r}_0})}.
$$
Since $ W^{s, \widetilde{r}_0} \subseteq L^{\widetilde{r}_0}$ for $s\ge0$,
and $$W^{s, \widetilde{r}_0} \hookrightarrow W^{1-\frac{2\epsilon_0}{p-1}, \widetilde{r}_0} \hookrightarrow L^{\frac{(p-1)(p+1)}{2\epsilon_0}}$$
by the Sobolev embedding in two dimensions,
we finally arrive at 
\begin{equation*}
	\|\langle \nabla_y \rangle^s F_p(u)\|_{L_t^{q_0'} (I; L_x^{r_0'} L_y^{\widetilde{r}_0'}) }
	\leq C T^{\beta_0(N,p,s)} \|u\|_{L_t^{q_0}(I;L_x^{r_0} W_y^{s, \widetilde{r}_0})}^p 
\end{equation*}
with $\beta_0(N,p,s) = 1-\frac{p+1}{q_0}$.
Note here that 
\begin{align*}
	\beta_0(N,p,s)
	= 1-\frac{p+1}{q_0}
	= \frac{N-2}{4} \big(1+\frac{4}{N-2} -p \big) -\epsilon_0 >0
\end{align*}
from the last one in \eqref{holl} and the choice of $\epsilon_0$.

The estimate \eqref{no2-s} is similarly but more easily obtained by using \eqref{nonli},  H\"older's inequality, and then the embedding. 
\end{proof}

Let us now prove that $\Phi$ is a contraction on $X(T,A)$.  
We first show that $\Phi(u) \in X$ for $u\in X$.
By Plancherel's theorem and a dual version of \eqref{hstr_Sch}, we see 
\begin{align*}
\sup_{t\in I}\|\Phi(u)\|_{L_x^2 H_y^s}
&\leq C\|f\|_{L_x^2 H_y^s}+C \sup_{t\in I} \bigg\| \langle \nabla_y \rangle^s  \int_0^t e^{-i\tau\Delta} F_p(u) \, d\tau\bigg\|_{L_x^2 L_y^2} \nonumber\\
&\leq C\|f\|_{L_x^2 H_y^s}+C\| \langle \nabla_y \rangle^s F_p(u)\|_{L_t^{q_0'}(I; L_x^{r_0'} L_y^{\widetilde{r}_0'})}
\end{align*}
for some $(q_0,r_0,\widetilde{r}_0)\in \mathbb{A}_S(N,2)$ for which Lemma \ref{lem1} holds.
By \eqref{no1-s} in Lemma \ref{lem1}, we then conclude that
    \begin{align*}
    \sup_{t\in I}\|\Phi(u)\|_{L_x^2 H_y^s}
    \leq C\|f\|_{L_x^2 H_y^s}+ CT^{\beta_0(N,p,s)} A^{p}
    \end{align*}
for $u\in X$.
Similarly, using \eqref{hstr_Sch}, \eqref{istr_Sch} and \eqref{no1-s} in Lemma \ref{lem1},
we also see
    \begin{align*}
    \|\Phi(u)\|_{L^{q}(I; L_x^{r} W_y^{s, \widetilde{r}})}
    &\leq C\|f\|_{L_x^2 H_y^s} +C\| \langle \nabla_y \rangle^s F_p(u)\|_{L^{q_0'}(I; L_x^{r_0'} L_y^{\widetilde{r}_0'}) } \nonumber\\
    &\leq C\|f\|_{L_x^2 H_y^s} +CT^{\beta_0(N,p,s)} A^{p}
    \end{align*}
for $u\in X$.
Hence, $\Phi(u)\in X$ if
\begin{equation}\label{ne}
C\|f\|_{L_x^2 H_y^s}+CT^{\beta_0(N,p,s)} A^{p} \leq A.
\end{equation}

On the other hand, by using \eqref{istr_Sch} and  \eqref{no2-s} in Lemma \ref{lem1},
    \begin{align*}\label{33}
    d\left( \Phi(u), \Phi(v) \right)
    &= d\left(\int_0^t e^{i(t-\tau)\Delta} F_p(u)\, d\tau ,\int_0^t e^{i(t-\tau)\Delta} F_p(v)\, d\tau  \right) \nonumber\\
    & \leq  C\| F_p(u)-F_p(v)\|_{L^{q_0'}(I; L_x^{r_0'} L_y^{\widetilde{r}_0'}) }\nonumber\\
    &\leq 2CT^{\beta_0(N,p,s)} A^{p-1} \, d(u, v)
    \end{align*}
for $u,v\in X$.
By choosing $A=2C\|f\|_{L_x^2 H_y^s}$ and taking $T$ sufficiently small so that
$$
2CT^{\beta_0(N,p,s)} A^{p-1} \leq \frac{1}{2},
$$
we see that \eqref{ne} holds and 
    $$
    d\left( \Phi(u), \Phi(v) \right) \leq \frac{1}{2} d(u, v).
    $$

\subsection{Wave case}\label{se5}
Here we prove Theorem \ref{mainThm-W} similarly as above. 
For simplicity, we prove the theorem by substituting $s-1/2$ with $s$.
Let $x\in\mathbb{R}^{N-1}$ and $y\in\mathbb{R}$. Instead of \eqref{hstr_Sch} and \eqref{istr_Sch}, we will this time use 
\begin{equation}\label{hstr_Wav}
	\|e^{it\sqrt{-\Delta}} f\|_{L_t^q (I ; L_x^rL_y^{\widetilde{r}})}
	\leq C \|f\|_{\dot{H}^{1/2}(\mathbb{R}^N)}
\end{equation}
and
\begin{equation}\label{istr_Wav}
	\left\| |\nabla|^{-1} \int_0^t e^{i(t-\tau)\sqrt{-\Delta}}F_p(u) \, d\tau \right\|_{L_t^q (I ; L_x^rL_y^{\widetilde{r}})}
	\leq C \|F_p\|_{L_t^{a'} (I ; L_x^{b'}L_y^{\widetilde{b}'} )}
\end{equation}
for any pairs $(q,r,\widetilde{r}), (a,b,\widetilde{b}) \in \mathbb{A}_{W}(N,1)$.
Estimate \eqref{hstr_Wav} follows immediately from Theorem \ref{thm1} with 
 $\sigma=1$, $k=1$ and $s=1/2$, and it yields \eqref{istr_Wav}
by the standard $TT^{\ast}$ argument and the Christ-Kiselev Lemma \cite{CK}.

Additionally, we need the following nonlinear estimates (cf. Lemma \ref{lem1}):

\begin{lem}\label{lem2}
Let $N\geq 2$, $0<s \leq 1/2$ and $1+\frac{2}{N-1}< p<1+\frac{4}{N-1-2s}$.
For any time interval $I=[0,T]$, there are a pair $(q_1,r_1,\widetilde{r}_1) \in \mathbb{A}_{W}(N,1)$ and $\beta_1(N,p,s)>0$ such that
	\begin{equation}\label{no1-w}
	\| \langle \partial_y \rangle^s F_p(u)\|_{L^{q_1'} (I; L_x^{r_1'} L_y^{\widetilde{r}_1'}) }
	\leq C T^{\beta_1(N,p,s) } \| u\|_{L^{q_1}(I;L_x^{r_1} W_y^{s,\widetilde{r}_1})}^p
	\end{equation}
and
    \begin{equation}\label{no2-w}
    \begin{aligned}
    &\| F_p(u)-F_p(v)\|_{L^{q_1'} (I; L_x^{r_1'} L_y^{\widetilde{r}_1'})} \\
    &\leq C T^{\beta_1(N,p,s) } \Big( \| u\|_{L^{q_1}(I;L_x^{r_1} W_y^{s,\widetilde{r}_1})}^{p-1}
    +\| v\|_{L^{q_1}(I;L_x^{r_1} W_y^{s,\widetilde{r}_1})}^{p-1} \Big)
    \|u-v\|_{L^{q_1}(I;L_x^{r_1} L_y^{\widetilde{r}_1})} .
    \end{aligned}
    \end{equation}
\end{lem}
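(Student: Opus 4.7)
My plan is to mirror the proof of Lemma \ref{lem1}, replacing the two-dimensional $y$-Sobolev embedding with its one-dimensional analog and working inside $\mathbb{A}_W(N,1)$ rather than $\mathbb{A}_S(N,2)$. First I fix a small parameter $\epsilon_1>0$ and choose a pair $(q_1,r_1,\widetilde{r}_1)\in\mathbb{A}_W(N,1)$ in the spirit of \eqref{holl}: $\frac{1}{r_1}=\frac{1}{p+1}$ and $\frac{1}{\widetilde{r}_1}$ just below $\frac{1}{2}$, with $q_1$ determined by the scaling identity. The parameter $\epsilon_1$ will depend on $N,p,s$ and will be taken so that (i) $q_1>p+1$ and hence $\beta_1:=1-(p+1)/q_1>0$, (ii) the Knapp-type sharp condition of $\mathbb{A}_W(N,1)$ holds, and (iii) the one-dimensional Sobolev embedding described below closes; the strict subcriticality $p<1+\frac{4}{N-1-2s}$ together with $p>1+\frac{2}{N-1}$ should leave enough slack for such a choice.

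Next I decompose $\langle \partial_y\rangle^s F_p(u)=F_p(u)+|\partial_y|^s F_p(u)$. For the zeroth-order term, \eqref{as} gives $|F_p(u)|\lesssim|u|\cdot|u|^{p-1}$ and I apply H\"older in $t$ (to pull out $\chi_I(t)$ and produce the $T^{\beta_1}$ factor), in $x$ (with $r_1=p+1$ so that $p$ copies of $L_x^{r_1}$ telescope into $L_x^{r_1'}$), and in $y$ (splitting $u$ at $L_y^{\widetilde{r}_1}$ against $|u|^{p-1}$ at $L_y^{\widetilde{r}_1/(\widetilde{r}_1-2)}$). For the fractional piece, I invoke Lemma \ref{fractional chain rule} slicewise in the one-dimensional $y$-variable with exponents $a=\widetilde{r}_1/(\widetilde{r}_1-2)$, $b=\widetilde{r}_1$, $c=\widetilde{r}_1'$, and then perform the identical $(t,x)$-H\"older estimate. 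What remains in both cases is to bound $\|u\|_{L_y^{(p-1)\widetilde{r}_1/(\widetilde{r}_1-2)}}$ by $\|u\|_{W_y^{s,\widetilde{r}_1}}$ via the one-dimensional Sobolev chain
\[ W_y^{s,\widetilde{r}_1}(\mathbb{R})\hookrightarrow W_y^{s',\widetilde{r}_1}(\mathbb{R})\hookrightarrow L_y^{(p-1)\widetilde{r}_1/(\widetilde{r}_1-2)}(\mathbb{R}) \]
for a suitable intermediate $0<s'\leq s$ with $s'\widetilde{r}_1<1$.

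I expect this last step to be the main obstacle: in 1D the dimensional count is tighter than in Lemma \ref{lem1}, so at $s=1/2$ the threshold $s\widetilde{r}_1=1$ sits exactly at $\widetilde{r}_1=2$ (forcing $\widetilde{r}_1>2$ strictly and limiting how small $\epsilon_1$ can be taken), while for $s<1/2$ a stronger lower bound of order $\epsilon_1\gtrsim(p-1)(\tfrac12-s)$ is needed to make $s\geq s'$ compatible with $s'\widetilde{r}_1<1$. Matching these Sobolev constraints against the $\mathbb{A}_W(N,1)$-admissibility window as $p$ varies in the subcritical range is the delicate combinatorial piece; it may require separating the analysis into sub-ranges of $p$, or using a slightly different pair (for instance $r_1=\widetilde{r}_1=p+1$) when $p$ is close to the wave endpoint $1+\frac{2}{N-1}$. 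Finally, for \eqref{no2-w} no new idea is needed: replace the pointwise bound $|u|^p=|u|\cdot|u|^{p-1}$ by \eqref{nonli}, $|F_p(u)-F_p(v)|\lesssim(|u|^{p-1}+|v|^{p-1})|u-v|$, and run the same H\"older$+$Sobolev scheme with $|u-v|$ playing the role of the solo $|u|$ factor.
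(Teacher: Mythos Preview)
Your plan is exactly the paper's argument, and the anticipated obstacle does not materialize: a single choice of $(q_1,r_1,\widetilde r_1)$ works throughout the range, with no case splitting. Concretely, take
\[
\frac{1}{r_1}=\frac{1}{p+1},\qquad \frac{1}{\widetilde r_1}=\frac{1}{2}-\frac{\epsilon_1}{p+1},\qquad \frac{1}{q_1}=\frac{N-2}{2}-\frac{N-1}{p+1}+\frac{\epsilon_1}{p+1},
\]
and the intermediate one-dimensional Sobolev exponent $s'=\tfrac{1}{2}-\tfrac{\epsilon_1}{p-1}$. Then $s'\widetilde r_1<1$ is automatic (since $\tfrac{\epsilon_1}{p-1}>\tfrac{\epsilon_1}{p+1}$), while the remaining constraints, namely $s'\le s$, $\widetilde r_1\le r_1$, $q_1>2$, $\beta_1:=1-\tfrac{p+1}{q_1}>0$, and the sharp wave-admissibility inequality, reduce to the window
\[
\max\Big\{\tfrac{(1-2s)(p-1)}{2},\ 1-\tfrac{(N-2)(p-1)}{2}\Big\}<\epsilon_1<\min\Big\{\tfrac{p-1}{2},\ 2-\tfrac{(N-2)(p-1)}{2}\Big\}.
\]
The four cross-inequalities needed for this window to be nonempty are equivalent, respectively, to $s>0$, to $p<1+\tfrac{4}{N-1-2s}$, to $p>1+\tfrac{2}{N-1}$, and to $1<2$; so the hypotheses on $p$ and $s$ are precisely what make the choice possible.
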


\begin{proof}
	The proof is similar as in Lemma \ref{lem1}.
	Let $N\geq2$ and $0<s\leq1/2$. 
	For fixed $1+\frac{2}{N-1}< p<1+\frac{4}{N-1-2s}$,
	we take a small $\epsilon_1>0$ such that
	\begin{align*}
		\max \left\{  \frac{(1-2s)(p-1)}{2}, 1-\frac{(N-2)(p-1)}{2}  \right\}<  \epsilon_1
		< \min \left\{ \frac{p-1}{2},  2-\frac{(N-2)(p-1)}{2}\right\},
	\end{align*}
	ant then take a pair $(q_1, r_1, \widetilde{r}_1)$ as
	\begin{equation}\label{holl2}
	\frac{1}{r_1}= \frac{1}{p+1}, \quad
	\frac{1}{\widetilde{r}_1}= \frac{1}{2} -\frac{\epsilon_1}{p+1}, \quad
	\frac{1}{q_1}= \frac{N-2}{2} - \frac{N-1}{p+1} +\frac{\epsilon_1}{p+1}
	\end{equation}
	so that
	 $(q_1, r_1, \widetilde{r}_1) \in \mathbb{A}_{W}(N,1)$.
	
Now we note that
	\begin{align*}
		\|\langle \partial_y \rangle^s F_p(u)\|_{L_t^{q_1'} (I; L_x^{r_1'} L_y^{\widetilde{r}_1'}) }
		&\leq \|F_p(u)\|_{L_t^{q_1'} (I; L_x^{r_1'} L_y^{\widetilde{r}_1'}) } +\||\partial_y|^s F_p(u)\|_{L_t^{q_1'} (I; L_x^{r_1'} L_y^{\widetilde{r}_1'}) }. 
	\end{align*}
	By \eqref{as} and H\"older's inequality with the first two ones in \eqref{holl2}, the first term in the right side above is bounded as
	\begin{align*}
		\|F_p(u)\|_{L_t^{q_1'} (I; L_x^{r_1'} L_y^{\widetilde{r}_1'}) }
		&\leq C T^{1-\frac{p+1}{q_1}} \|u\|_{L_t^{q_1}(I;L_x^{r_1} L_y^{(p-1)(p+1)/2\epsilon_1})}^{p-1} \|u\|_{L_t^{q_1}(I;L_x^{r_1}L_y^{\widetilde{r}_1})}.
	\end{align*}
For the second term, we use \eqref{frac_chainrule} and then apply the H\"older inequality as before to get
$$
\||\partial_y|^s F_p(u)\|_{L_t^{q_1'} (I; L_x^{r_1'} L_y^{\widetilde{r}_1'}) } \leq C T^{1-\frac{p+1}{q_1}} \|u\|_{L_t^{q_1}(I;L_x^{r_1} L_y^{(p-1)(p+1)/2\epsilon_1})}^{p-1} \|u\|_{L_t^{q_1}(I;L_x^{r_1}W_y^{s, \widetilde{r}_1})}.
$$
Since $ W^{s, \widetilde{r}_1} \subseteq L^{\widetilde{r}_1}$ for $s\ge0$, and
	$$
	W^{s, \widetilde{r}_1} \hookrightarrow W^{\frac{1}{2}-\frac{\epsilon_1}{p-1}, \widetilde{r}_1} \hookrightarrow L^{\frac{(p-1)(p+1)}{2\epsilon_1}}
	$$
	by the Sobolev embedding in one dimension, we finally arrive at 
	\begin{align*}
		\|\langle \partial_y \rangle^s F_p(u)\|_{L^{q_1'} (I; L_x^{r_1'} L_y^{\widetilde{r}_1'}) }
		\leq C T^{\beta_1(N,p,s)} \|u\|_{L^{q_1}(I;L_x^{r_1} W_y^{s, \widetilde{r}_1})}^p
	\end{align*}
	with $\beta_1(N,p,s)= 1-\frac{p+1}{q_1}$.
	Note here that 
	$$
	\begin{aligned}
		\beta_1(N,p,s)
		=1-\frac{p+1}{q_1}
		= \frac{N-2}{2} \big(1+\frac{4}{N-2} -p \big) -\epsilon_1 >0
	\end{aligned}
	$$
	from the last one in \eqref{holl2} and the choice of $\epsilon_1$.
	
	Using \eqref{nonli}, H\"older's inequality, and then the embedding,  
	the estimate \eqref{no2-w} follows similarly but more easily.
\end{proof}

Now we are ready to prove Theorem \ref{mainThm-W} based on the contraction mapping principle.
By Duhamel's principle, the solution map of \eqref{NLW} is given as
    \begin{equation*}
    \Phi(u)=\cos (t\sqrt{-\Delta}) f + \frac{\sin (t\sqrt{-\Delta})}{\sqrt{-\Delta}} g + \int_0^t \frac{\sin((t-\tau)\sqrt{-\Delta})}{\sqrt{-\Delta}} F_p(u)\, d\tau.
    \end{equation*}
By Euler's formula, it is rewritten as
    \begin{equation*}
    \begin{aligned}
    \Phi(u)
    &= \frac{1}{2}\big( e^{it\sqrt{-\Delta}}+e^{-it\sqrt{-\Delta}} \big) f
    + \frac{1}{2}\Big( \frac{e^{it\sqrt{-\Delta}} - e^{-it\sqrt{-\Delta}} }{\sqrt{-\Delta}} \Big) g \\
    &\qquad + \frac{1}{2}\int_0^t
    \frac{e^{i(t-\tau)\sqrt{-\Delta}}-e^{-i(t-\tau)\sqrt{-\Delta}}}{\sqrt{-\Delta}}  F_p(u)\, d\tau.
    \end{aligned}
    \end{equation*}

For $0<s\leq1/2$ and suitable values of $T, A>0$, it suffices to prove that $\Phi$ defines a contraction map on  
    \begin{align*}
    X(T, A)=  \bigg \lbrace \langle \partial_y \rangle^s u \in C(I; \dot{H}_{x,y}^{\frac{1}{2}} ) \cap L_t^q (I ;L_x^{r} &L_y^{ \widetilde{r}}): \\
    &\sup_{(q, r, \widetilde{r}) \in \mathbb{A}_{W}(N, 1)}\| \langle \partial_y\rangle^s u\|_{L_t^q (I ;L_x^{r} L_y^{ \widetilde{r}})} \leq A \bigg\rbrace
    \end{align*}
equipped with the distance
    $$
    d(u, v)= \sup_{(q, r, \widetilde{r}) \in \mathbb{A}_{W}(N, 1)} \|u-v\|_{L_t^q (I ;L_x^{r} L_y^{\widetilde{r}})}.
    $$

We begin with showing $\Phi(u) \in X$ for $u\in X$.
Using Plancherel's theorem, we see
    \begin{align*}
    \sup_{t\in I}\|\langle \partial_y \rangle^s \Phi(u)\|_{\dot{H}_{x,y}^{\frac{1}{2}}}
    \leq C\| \langle \partial_y \rangle^s f\|_{\dot{H}^{\frac{1}{2}}} &+ C\|  \langle \partial_y \rangle^s g\|_{\dot{H}^{-\frac{1}{2}}}\\
    &+C \sup_{t\in I} \bigg\| \langle \partial_y \rangle^s  \int_0^t \frac{e^{i\tau\sqrt{-\Delta}}}{|\nabla|^{1/2}} F_p(u) \, d\tau\bigg\|_{L^2}.
    \end{align*}
By a dual version of \eqref{hstr_Wav} and applying \eqref{no1-w} in Lemma \ref{lem2}, the last term in the right side is bounded as
 \begin{align*}
\sup_{t\in I} \bigg\| \langle \partial_y \rangle^s  \int_0^t \frac{e^{i\tau\sqrt{-\Delta}}}{|\nabla|^{1/2}} F_p(u) \, d\tau\bigg\|_{L^2}
	&\leq C\| \langle \partial_y \rangle^s F_p(u)\|_{L^{q_1'} (I ;L_x^{r_1'} L_y^{\widetilde{r}_1'})}\\
	 &\leq CT^{\beta_1(N,p,s)}  \| u\|_{L^{q_1}(I;L_x^{r_1} W_y^{s,\widetilde{r}_1})}^p
\end{align*}
for some $(q_1,r_1,\widetilde{r}_1)\in \mathbb{A}_W(N,1)$ for which Lemma \ref{lem2} holds.
Hence,
\begin{align*}
\sup_{t\in I}\| \langle \partial_y \rangle^s  \Phi(u)\|_{\dot{H}^{\frac{1}{2}}}
&\leq C\|  \langle \partial_y \rangle^s f\|_{\dot{H}^{\frac{1}{2}}} + C\|  \langle \partial_y \rangle^s g\|_{\dot{H}^{-\frac{1}{2}}} + CT^{\beta_1(N,p,s)} A^{p}
\end{align*}
for $u\in X$.
Similarly, using \eqref{hstr_Wav}, \eqref{istr_Wav} and \eqref{no1-w} in Lemma \ref{lem2}, we also see
\begin{align*}
\|\Phi(u)\|_{L_t^{q}(I; L_x^{r} W_y^{s, \widetilde{r}})}
&\leq C\|  \langle \partial_y \rangle^s f\|_{\dot{H}^{\frac{1}{2}}} + C\|  \langle \partial_y \rangle^s g\|_{\dot{H}^{-\frac{1}{2}}} + C\| \langle \partial_y \rangle^s F_p(u)\|_{L^{q_1'} (I ;L_x^{r_1'} L_y^{\widetilde{r}_1'})} \nonumber\\
&\leq C\|  \langle \partial_y \rangle^s f\|_{\dot{H}^{\frac{1}{2}}} + C\|  \langle \partial_y \rangle^s g\|_{\dot{H}^{-\frac{1}{2}}} +CT^{\beta_1(N,p,s)} A^{p}
\end{align*}
for $u\in X$.
Consequently, $\Phi(u)\in X$ if
\begin{equation}\label{wne}
C\|  \langle \partial_y \rangle^s f\|_{\dot{H}^{\frac{1}{2}}} + C\|  \langle \partial_y \rangle^s g\|_{\dot{H}^{-\frac{1}{2}}} +CT^{\beta_1(N,p,s)} A^{p} \leq A.
\end{equation}

On the other hand, by \eqref{istr_Wav} and \eqref{no2-w} in Lemma \ref{lem2},
\begin{align*}
d\left( \Phi(u), \Phi(v) \right)
&= d\left(\int_0^t \frac{e^{i(t-\tau)\sqrt{-\Delta}}}{\sqrt{-\Delta}} F_p(u)\, d\tau ,\int_0^t \frac{e^{i(t-\tau)\sqrt{-\Delta}}}{\sqrt{-\Delta}}  F_p(v)\, d\tau  \right) \nonumber\\
& \leq  C\| F_p(u)-F_p(v)\|_{L^{q_1'} (I ;L_x^{r_1'} L_y^{\widetilde{r}_1'})}\nonumber\\
& \leq  2CT^{\beta_1(N,p,s)} A^{p-1} \, d(u, v)
\end{align*}
for $u,v \in X$.
By choosing $A=2C\|  \langle \partial_y \rangle^s f\|_{\dot{H}^{1/2}} + 2C\|  \langle \partial_y \rangle^s g\|_{\dot{H}^{-1/2}}$ and taking $T$ sufficiently small so that
$$2CT^{\beta_1(N,p,s)}A^{p-1} \leq \frac{1}{2},$$
we see that \eqref{wne} holds and 
$$d(\phi(u), \phi(v)) \leq  \frac12d(u, v).$$

\end{document}